\newcommand{\newsection}[1]{\setcounter{equation}{0} \section{#1}}
\newcommand{\bea}{\begin{eqnarray}}
\newcommand{\eea}{\end{eqnarray}}
\newcommand{\clb}{\mathcal{B}}
\newcommand{\cld}{\mathcal{D}}
\newcommand{\cle}{\mathcal{E}}
\newcommand{\clh}{\mathcal{H}}
\newcommand{\clk}{\mathcal{K}}
\newcommand{\clj}{\mathcal{J}}
\newcommand{\cll}{\mathcal{L}}
\newcommand{\clm}{\mathcal{M}}
\newcommand{\cln}{\mathcal{N}}
\newcommand{\clp}{\mathcal{P}}
\newcommand{\clq}{\mathcal{Q}}
\newcommand{\clr}{\mathcal{R}}
\newcommand{\cls}{\mathcal{S}}
\newcommand{\clu}{\mathcal{U}}
\newcommand{\T}{\mathcal{T}}
\newcommand{\sq}{\mathsf{Q}}
\newcommand{\sr}{\mathsf{R}}
\newcommand{\s}{\mathsf{S}}
\newcommand{\st}{\mathsf{T}}
\newcommand{\sv}{\mathsf{V}}
\newcommand{\sw}{\mathsf{W}}
\newcommand{\sa}{\mathsf{A}}
\newcommand{\sanb}{\mathsf{B}}
\newcommand{\sanc}{\mathsf{C}}
\newcommand{\D}{\mathbb{D}}
\newcommand{\Z}{\mathbb{Z}}
\newcommand{\z}{\bm{z}}
\def\textmatrix#1&#2\\#3&#4\\{\bigl({#1 \atop #3}\ {#2 \atop #4}\bigr)}
\def\dispmatrix#1&#2\\#3&#4\\{\left({#1 \atop #3}\ {#2 \atop #4}\right)}
\newcommand{\be}{\begin{equation}}
\newcommand{\ee}{\end{equation}}
\newcommand{\ben}{\begin{eqnarray*}}
\newcommand{\een}{\end{eqnarray*}}
\newcommand{\bi}{\begin{itemize}}
\newcommand{\ei}{\end{itemize}}
\newtheorem{Theorem}{\sc Theorem}[section]
\newtheorem{Lemma}[Theorem]{\sc Lemma}
\newtheorem{Proposition}[Theorem]{\sc Proposition}
\newtheorem{Corollary}[Theorem]{\sc Corollary}
\newtheorem{Definition}[Theorem]{\sc Definition}
\newtheorem{Example}[Theorem]{\sc Example}
\newtheorem{Remark}[Theorem]{\sc Remark}
\newtheorem{Remarks}[Theorem]{\sc Remarks}
\newtheorem{Note}[Theorem]{\sc Note}
\newtheorem{Question}{\sc Question}
\newtheorem{ass}[Theorem]{\sc Assumption}
\newcommand{\bt}{\begin{Theorem}}
\def\beginlem{\begin{Lemma}}
\def\beginprop{\begin{Proposition}}
\def\begincor{\begin{Corollary}}
\def\begindef{\begin{Definition}}
\def\beginexamp{\begin{Example}}
\def\beginrem{\begin{Remark}}
\def\beginq{\begin{Question}}
\def\beginass{\begin{ass}}
\def\beginnote{\begin{Note}}
\newcommand{\et}{\end{Theorem}}
\def\endlem{\end{Lemma}}
\def\endprop{\end{Proposition}}
\def\endcor{\end{Corollary}}
\def\enddef{\end{Definition}}
\def\endexamp{\end{Example}}
\def\endrem{\end{Remark}}
\def\endq{\end{Question}}
\def\endass{\end{ass}}
\def\endnote{\end{Note}}
\begin{document}
\title{Factorization of Toeplitz operators}

\author[Panja]{Samir Panja}
\address{Department of Mathematics, Indian Institute of Technology Bombay, Powai, Mumbai, 400076, India}
\email{panjasamir2020@gmail.com, spanja@math.iitb.ac.in}

\subjclass[2020]{47B35, 47A20, 47A13, 30H10} 
\keywords{Toeplitz operator, Pseudo-extension, Dilation, Banach limit, Hardy space}

\begin{abstract}
In this article, by considering $T=(T_1,\dots, T_n)$, an $n$-tuple of commuting contractions  on a Hilbert space $\clh$, we study $T$-Toeplitz operators which consists of bounded operators $X$ on $\clh$ such that
\[
T_i^*XT_i=X
\]
for all $i=1,\dots,n$. We show that any positive $T$-Toeplitz operator can be factorized in terms of an isometric pseudo-extension of $T$. A similar factorization result in terms of a BCL type of co-isometric pseudo-extension is also obtained for positive pure lower $T$-Toeplitz operators. However, 
a certain difference has been observed between the case $n=2$ and $n>2$. In a more general context, by considering $n$-tuples of commuting contractions $S$ and $T$, we also study $(S, T)$-Toeplitz operators. 
\end{abstract}

\maketitle

\section*{Notation}
\begin{list}{\quad}{}
 \item $\mathbb{Z}_+$ \quad \quad  Set of all positive integer numbers including 0.
\item $\mathbb{Z}^n_+$ \quad \quad $\{\alpha=(\alpha_1,\ldots, \alpha_n): \alpha_i \in \mathbb{Z}_+, i=1,\ldots,n\}$.
\item $\mathbb{C}$ \quad \quad \, Set of all complex numbers.
\item $T$ \quad \quad \, $n$-tuple of commuting bounded operators $(T_1, \ldots, T_n)$.
\item $T^{\alpha}$ \quad \quad $T_1^{\alpha_1} \cdots T_n^{\alpha_n}$.
\item $\mathbb{D}$  \quad \quad \, Open unit disc $\{z\in\mathbb{C}: |z|<1\}$.
\end{list}

For two Hilbert spaces (separable) $\clh$ and $\clk$, we shall denote the space of all bounded operators from $\clh$ into $\clk$ by $\clb(\clh, \clk)$, and we abbreviate $\clb(\clh, \clh)$ to $\clb(\clh)$. For a closed subspace $\cls$ of $\clh$, we denote by  $P_\cls$ the orthogonal projection from $\clh$ onto $\cls$, and the restriction of an operator $A\in \clb(\clh)$ to $\cls$ will be denoted by $A|_{\cls}$. 
We consider $\Z_+^n$ with the usual partial order, that is, for two multi-indexes $\alpha=(\alpha_1,\ldots,\alpha_n) \in \Z^n_+$ and $\beta=(\beta_1,\ldots,\beta_n) \in \Z^n_+$, we say $\alpha\leq \beta$ if and only if $\alpha_i\leq \beta_i$ for all $i=1,\ldots,n$. 

\newsection{Introduction}

Toeplitz operators on the Hardy space are one of the most important and widely studied objects in operator theory as well as in function theory, and it was first introduced by O. Toeplitz in 1911 (\cite{OT}). Toeplitz operator with a symbol $\psi \in L^\infty (\mathbb{T})$ on the Hardy space $H^2(\D)$ is denoted by $T_\psi$ and defined as
\[T_\psi f= P_{H^2(\D)} \psi f \quad ( f \in H^2(\D)).\]
Here, $L^\infty (\mathbb{T})$ denotes the $C^*$-algebra of $\mathbb{C}$-valued essentially bounded Lebesgue measurable functions on the unit circle $\mathbb{T}$, $L^2 (\mathbb{T})$ is the Hilbert space of all square-integrable functions on $\mathbb{T}$, and the Hardy space $H^2(\D)$ over $\D$ is defined as 
\[H^2(\D)=\{f: \D \to \mathbb{C}: f(z)=\sum_{k\in \Z_+} a_n z^k, a_k\in \mathbb{C}, z \in \D, \sum_{k\in \Z_+} |a_k|^2 < \infty \}.\]
On $H^2(\D)$, one of the distinguished isometries is the unilateral shift $M_z$, defined by $M_zf(w)=wf(w)$ ($f\in H^2(\D)$).   
The Brown and Halmos characterization of Toeplitz operators (\cite{BH}) 
says that an operator $X\in \clb(H^2(\D))$ is a Toeplitz operator if and only if $M_z^* X M_z=X$, that is, $X=T_\psi$ for some $\psi \in L^\infty (\mathbb{T})$ if and only if $M_z^* X M_z=X$. 
In an abstract setting, replacing the unilateral shift $M_z$ by a Hilbert space contraction $T\in \clb(\clh)$, several authors have studied $T$-Toeplitz operators which consists of $X\in\clb(\clh)$ satisfying the identity $T^* X T =X$. We refer the reader to \cite{LK, LK1, CM, CM2} and references therein. In the multi-variable setting, for an $n$-tuple of commuting contractions $T=(T_1,\ldots, T_n)\in\clb(\clh)^n$, an operator $X\in\clb(\clh)$ is a $T$-Toeplitz operator if
\begin{align*}
    T^*_i X T_i=X \quad (i=1,\ldots,n).
\end{align*}
Such operators have been considered in \cite{BTH, BP, PM}. In particular, if we take $T=(M_{z_1},\ldots,M_{z_n})$, the $n$-tuple of multi-shifts on the Hardy space $H^2(\D^n)$ over the polydisc $\D^n$, then $T$-Toeplitz operators are precisely the Toeplitz operators on $H^2(\D^n)$ (\cite{JMS}). It has been shown in \cite{BTH} that the existence of non-zero $T$-Toeplitz operators is related to isometric pseudo-extensions of $T$.  
\begin{Definition}\label{pseudo_D1}
Let $T=(T_1,\ldots,T_n)$ be an $n$-tuple of commuting commuting contractions on $\clh$. An $n$-tuple of commuting commuting contractions $V=(V_1,\ldots,V_n)$ on $\clk$ is said to be a pseudo-extension of $T$ if 
\item (1) there is a non-zero contraction $\clj:\clh \to \clk$, and 
\item (2) $\clj T_i=V_i \clj$ for all $i=1,\ldots,n$.
\end{Definition}
A pseudo-extension of $T$ is denoted by a triple $(\clj,\clk, V)$. A pseudo-extension $(\clj,\clk, V)$ of $T$ is said to be minimal if $\clk$ is the smallest joint reducing space for $V$ containing $\clj \clh$.
A minimal pseudo-extension $(\clj, \clk, V)$ of $T$ is called canonical if
\[\clj^* \clj= \text{SOT}-\lim_{k\to \infty} P^{* k}P^k,\]
where $P=T_1\cdots T_n$ is the product contraction of $T$ and the limit is in the strong operator topology (SOT). A pseudo-extension $(\clj,\clk, V)$ of $T$ is said to be isometric (co-isometric or unitary) pseudo-extension if $V=(V_1,\ldots,V_n)$ is an $n$-tuple of commuting isometries (co-isometries or unitaries, respectively). In the case when $V$ is an $n$-tuple of non-commuting operators, we say that the triple $(\clj,\clk, V)$ is a non-commuting pseudo-extension of $T$. In \cite{BTH}, it is proved that a non-zero $T$-Toeplitz operator exists if and only if $T$ has a canonical isometric pseudo-extension $(\clj,\clk, V)$. In fact, if $(\clj,\clk, V)$ is any isometric pseudo-extension of $T$, then it is easy to see that $\clj^* \clj$ is a non-zero contractive positive $T$-Toeplitz operator. This leads us to ask a natural question whether all positive contractive $T$-Toeplitz operators are of the form $\clj^* \clj$ for some isometric pseudo-extension $(\clj,\clk, V)$ of $T$. We answer this question in affirmative. The answer was known to be positive for the case $n=1$ (see ~\cite{LK}).      
\begin{Theorem}\label{Fact_To}
Let $T=(T_1,\ldots,T_n) \in \clb(\clh)^n$ be an $n$-tuple of commuting contractions. Then $R$ is a positive $T$-Toeplitz operator if and only if there exist a Hilbert space $\clk$, a bounded operator $\clj: \clh \to \clk$, and an $n$-tuple of commuting isometries $V=(V_1,\ldots, V_n)\in \clb(\clk)^n$ such that 
\[\clj^* \clj =R \quad \text{and}\quad \clj T_i= V_i \clj \quad (i=1,\ldots,n).\] 
In particular, if $R$ is a positive contractive $T$-Toeplitz operator, then  $(\clj, \clk, V)$ is an isometric pseudo-extension of $T$. 
\end{Theorem}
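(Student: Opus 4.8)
The plan is to prove the two implications separately; the substance lies entirely in constructing the commuting isometric tuple from a given positive $T$-Toeplitz operator, as the reverse implication is a one-line computation. For that reverse direction, suppose $\clk$, $\clj$, and $V$ are given with $\clj^*\clj = R$ and $\clj T_i = V_i \clj$. Then for each $i$,
\[
T_i^* R T_i = T_i^* \clj^* \clj T_i = (\clj T_i)^*(\clj T_i) = \clj^* V_i^* V_i \clj = \clj^* \clj = R,
\]
where the penultimate equality uses $V_i^* V_i = I$ (each $V_i$ is an isometry). Since $R = \clj^*\clj \geq 0$, it is a positive $T$-Toeplitz operator.

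For the forward direction, given a positive $T$-Toeplitz operator $R$, I would take $\clj = R^{1/2}$ and $\clk = \overline{\operatorname{ran} R^{1/2}}$, viewing $\clj$ as a bounded operator from $\clh$ into $\clk$, so that $\clj^*\clj = R$ holds automatically. The crux is to define, for each $i$, an operator $V_i$ on $\clk$ by the rule
\[
V_i(R^{1/2} h) = R^{1/2} T_i h \qquad (h \in \clh)
\]
on the dense subspace $\operatorname{ran} R^{1/2}$. The Toeplitz identity is precisely what makes this legitimate: since $\|R^{1/2} T_i h\|^2 = \langle T_i^* R T_i h, h\rangle = \langle R h, h\rangle = \|R^{1/2} h\|^2$, applying this to a difference $h-h'$ shows that $R^{1/2} h = R^{1/2} h'$ forces $R^{1/2} T_i h = R^{1/2} T_i h'$, so $V_i$ is well defined, and the same identity shows it is isometric on $\operatorname{ran} R^{1/2}$. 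Hence $V_i$ extends to an isometry on all of $\clk$, and the intertwining $\clj T_i = V_i \clj$ holds by construction.

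It then remains to verify that $V = (V_1,\ldots,V_n)$ is a commuting tuple. On the dense subspace one computes $V_i V_j (R^{1/2} h) = R^{1/2} T_i T_j h = R^{1/2} T_j T_i h = V_j V_i(R^{1/2} h)$ using commutativity of $T$; since each $V_i$ is bounded, the relation $V_i V_j = V_j V_i$ passes to the closure $\clk$. This yields the desired commuting isometric tuple and completes the main direction. For the final assertion, if $\|R\| \leq 1$ then $\|\clj\| = \|R^{1/2}\| = \|R\|^{1/2} \leq 1$, so $\clj$ is a (non-zero, whenever $R \neq 0$) contraction, and therefore $(\clj, \clk, V)$ satisfies Definition~\ref{pseudo_D1} as an isometric pseudo-extension of $T$.

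I expect the only genuine subtlety, and hence the step I would write most carefully, to be the simultaneous well-definedness and isometry of each $V_i$ on $\operatorname{ran} R^{1/2}$, together with the passage of both the isometry property and the commutation relations from this dense subspace to its closure $\clk$. Everything else reduces to the factorization $R = (R^{1/2})^* R^{1/2}$ and the defining Toeplitz relations, so no deeper dilation machinery is required for this particular statement.
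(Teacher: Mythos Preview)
Your proposal is correct and follows essentially the same argument as the paper: both set $\clk=\overline{\text{Ran}}\,R^{1/2}$, $\clj=R^{1/2}$, and define each $V_i$ by $R^{1/2}h\mapsto R^{1/2}T_ih$, using the Toeplitz identity to get isometry (hence well-definedness) and the commutativity of $T$ for that of $V$. If anything, you spell out the well-definedness step more explicitly than the paper does.
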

Along this line, for an $n$-tuple of commuting contractions $T=(T_1, \ldots,T_n)\in\clb(\clh)^n$, we also study solutions of systems of operator inequalities
\begin{align}\label{In3}
    T^*_i X T_i \geq X \quad (i=1,\ldots,n), 
\end{align}
and
\begin{align}\label{In4}
    T^*_i X T_i \leq X \quad (i=1,\ldots,n), 
\end{align}
where $X\in\clb(\clh)$ is a self-adjoint operator. We say that a self-adjoint operator $X\in\clb(\clh)$ is an upper (lower) $T$-Toeplitz operator if $X$ satisfies \eqref{In3} (\eqref{In4}, respectively). For $n=1$, solutions of such inequalities have been studied in \cite{DG}(see also \cite{GC}). Similar to the case of $n=1$, the upper and lower $T$-Toeplitz operators are related in the following way. For $\alpha\in\mathbb Z_+^n$, by $\alpha\to \infty$ we mean that each $\alpha_i\to \infty$. 
 \begin{Theorem}\label{sb1}
Let $T=(T_1, \ldots,T_n)\in \clb(\clh)^n$  be an $n$-tuple of commuting contractions. Then 
$X$ is an upper (lower) $T$-Toeplitz operator if and only if $X$ can be uniquely written as $X=U-N$ ($X=U+N$), where $U$ is a self-adjoint $T$-Toeplitz operator and $N$ is a positive lower $T$-Toeplitz operator 
such that $T^{* \alpha} N T^{\alpha} \to 0$ in SOT as $\alpha\to \infty$.
\end{Theorem}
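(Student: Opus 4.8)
The plan is to establish the upper case in full and then deduce the lower case by applying it to $-X$, since $X$ is lower $T$-Toeplitz precisely when $-X$ is upper $T$-Toeplitz (and a decomposition $-X = U_0 - N$ rewrites as $X = (-U_0) + N$, with $-U_0$ again self-adjoint $T$-Toeplitz). So assume $X = X^*$ satisfies $T_i^* X T_i \ge X$ for each $i$. For a self-adjoint $Y$ write $\Phi_i(Y) = T_i^* Y T_i$; each $\Phi_i$ is positive (order-preserving) and self-adjointness-preserving, and the $\Phi_i$ commute because the $T_i$ do, so $T^{*\alpha} Y T^\alpha = \Phi_1^{\alpha_1}\cdots \Phi_n^{\alpha_n}(Y) =: \Phi^\alpha(Y)$ is unambiguous for $\alpha \in \Z_+^n$. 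The candidate $T$-Toeplitz part is
\[
U := \text{SOT-}\lim_{\alpha \to \infty} \Phi^\alpha(X), \qquad N := U - X .
\]

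The central step is the existence of this limit. I claim the net $\{\Phi^\alpha(X)\}_{\alpha \in \Z_+^n}$ is monotone increasing. Since the $\Phi_i$ commute and are order-preserving, the hypothesis $\Phi_i(X) \ge X$ propagates: $\Phi_i(\Phi^\alpha(X)) = \Phi^\alpha(\Phi_i(X)) \ge \Phi^\alpha(X)$, so every $\Phi^\alpha(X)$ is itself upper $T$-Toeplitz, and iterating this over the coordinates of $\beta - \alpha$ gives $\Phi^\beta(X) \ge \Phi^\alpha(X)$ whenever $\beta \ge \alpha$. Moreover $\|\Phi^\alpha(X)\| \le \|X\|$ because each $T_i$ is a contraction, so the net is uniformly bounded above by $\|X\|\,I$. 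As $(\Z_+^n, \le)$ is directed, Vigier's theorem for monotone bounded nets of self-adjoint operators yields SOT-convergence to $U = \sup_\alpha \Phi^\alpha(X)$, which is the self-adjoint limit taken as $\alpha \to \infty$.

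With $U$ in hand I would verify the three required properties. That $U$ is $T$-Toeplitz follows from SOT-continuity of $Y \mapsto T_i^* Y T_i$ together with cofinality of the shifted net: $T_i^* U T_i = \text{SOT-}\lim_\alpha \Phi^{\alpha + e_i}(X) = U$. Positivity $N \ge 0$ is immediate from $U = \sup_\alpha \Phi^\alpha(X) \ge \Phi^0(X) = X$. That $N$ is lower $T$-Toeplitz reduces, using $\Phi_i(U) = U$, to $\Phi_i(N) = U - \Phi_i(X) \le U - X = N$, i.e. once more to $\Phi_i(X) \ge X$. Finally $\Phi^\alpha(N) = \Phi^\alpha(U) - \Phi^\alpha(X) = U - \Phi^\alpha(X) \to 0$ in SOT, giving the decay condition. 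The converse direction is the same computation read backwards: if $X = U - N$ with $U$ self-adjoint $T$-Toeplitz and $N \ge 0$ lower $T$-Toeplitz, then $\Phi_i(X) - X = N - \Phi_i(N) \ge 0$, so $X$ is upper $T$-Toeplitz.

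For uniqueness, if $X = U - N = U' - N'$ are two such decompositions, applying $\Phi^\alpha$ and using that $U, U'$ are $T$-Toeplitz gives $\Phi^\alpha(X) = U - \Phi^\alpha(N) = U' - \Phi^\alpha(N')$; letting $\alpha \to \infty$ and invoking the decay of $N$ and $N'$ forces $U = U'$, whence $N = N'$. The step I expect to require the most care is the first one: justifying rigorously that the net over the directed set $\Z_+^n$ is genuinely monotone (which rests squarely on the commutativity and positivity of the $\Phi_i$) and that its directed-net SOT-limit agrees with the coordinatewise "$\alpha \to \infty$" limit in the statement. Everything after the convergence of $\Phi^\alpha(X)$ is a routine rearrangement of the defining inequalities.
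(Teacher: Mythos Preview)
Your proposal is correct and follows essentially the same approach as the paper: both define $U$ as the SOT-limit of the increasing bounded net $T^{*\alpha} X T^{\alpha}$, set $N=U-X$, and verify the required properties and uniqueness in the same way. The only cosmetic differences are your use of the $\Phi_i$ notation and Vigier's theorem by name, and that you reduce the lower case to the upper case via $-X$ whereas the paper simply remarks that the lower case is analogous.
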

Motivated by the above theorem, we define 
\[\clp\cll_+(T)=\{N\in\clb(\clh):N\geq0, T^*_i N T_i\leq N, i=1\ldots,n,T^{* \alpha} N T^{\alpha} \to 0\, \text{in SOT as}\, \alpha\to\infty \}\]
and we call elements of $\clp\cll_+(T)$ as positive pure lower $T$-Toeplitz operators. It can be shown that, in the above definition of $\clp\cll_+(T)$, the convergence $T^{* \alpha} N T^{\alpha} \to 0$ in SOT as $\alpha\to \infty$ is equivalent to $P_T^{* k}N P_T^{k}\to 0$ in SOT as $k\to \infty$, where $P_T=T_1 \cdots T_n$.
From the point of view of the above theorem, $\clp\cll_+(T)$ plays a major role in the study of upper or lower $T$-Toeplitz operators and it is important to find a characterization of $\clp\cll_+(T)$.
In many contexts in operator theory, the difference between the case $n=2$ and $n>2$ is prominent. For instance, any pair of commuting contractions always possesses an isometric dilation (\cite{Ando}) but any $n$-tuples ($n>2$) of commuting contractions does not possess an isometric dilation in general (\cite{Par}). Such a distinction also arises in our characterization of positive pure lower $T$-Toeplitz operators and which is why we separate the case $n=2$ from the rest. Before further details, recall that due to Berger, Coburn and Lebow (\cite{BCL}), any $n$-tuple of commuting isometies with product isometries being pure is unitarily equivalent to $n$-tuple of commuting isometries $(M_{\Phi_1},\dots, M_{\Phi_n})$ on the Hardy space $H^2_{\cle}(\D)$, where $\cle$ is a coefficient Hilbert space and for all $i=1,\dots,n$,
\[
\Phi_i(z)=U_i (P_i^\perp+ z P_i), \,\, ( z\in \D)
\]
for some unitary $U_i\in \clb(\clh)$ and orthogonal projection $P_i\in \clb(\cle)$ such that 
\begin{equation*}
    \Phi_i(z)\Phi_j(z)=\Phi_j(z)\Phi_i(z)\ \text{ and } \Phi_1(z)\cdots\Phi_n(z)=zI_{\cle}\ \quad (z\in\mathbb D).
\end{equation*}
Here, the multiplication operator $M_\Phi$ corresponding to $\Phi$ is defined as
 \[M_\Phi:H^2_{\cle}(\mathbb D) \to H^2_{\cle}(\D), \quad f\mapsto \Phi f \quad (f\in H^2_{\cle}(\mathbb D)).\]
Such an $n$-tuple $(M_{\Phi_1},\cdots, M_{\Phi_n})$ is commonly known as a BCL tuple (see ~\cite{BCL, DDS, BDF} for more details). 
For $n=2$, if $T=(T_1, T_2)$ is a pair of commuting contractions, then we prove that any contractive positive pure lower $T$-Toeplitz operators can be factorized in terms of a BCL pair of co-isometric pseudo-extension of $T$. This is one of the main results of this paper and it is motivated from its one variable counterpart (see ~\cite[Theorem 5]{DG}). A more detailed statement of the result is the following. 
\begin{Theorem}\label{2_M_theta}
Let $T=(T_1,T_2)\in \clb(\clh)^2$  be a pair of commuting contractions. Then $N \in \clp\cll_+(T)$ if and only if there exist a Hilbert space $\cle$, a bounded operator $\Pi:\clh \to H^2_{ \cle}(\mathbb{D})$, and a pair of commuting isometries $M=(M_{\Phi_1}, M_{\Phi_2})\in \clb(H^2_{ \cle}(\mathbb{D}))^2$  such that 
\[N=\Pi^* \Pi,
\quad \text{and}\quad \Pi T_i= M^*_{\Phi_i}\Pi \quad(i=1,2),\]
where
$\Phi_1(z)=(P+ z P^\perp) U^*$ and $\Phi_2(z)=U(P^\perp+ z P)$ are so that $\Phi_1(z) \Phi_2(z)=\Phi_2(z)\Phi_1(z)=z$ ($z \in \D$) for some unitary $U\in\clb(\cle)$ and orthogonal projection $P\in\clb(\cle)$.

In particular, if $N\in \clp\cll_+(T)$ is a contraction, then $(\Pi, H^2_{ \cle}(\mathbb{D}), (M^*_{\Phi_1}, M^*_{\Phi_2}))$ is a co-isometric pseudo-extension of $T$.
\end{Theorem}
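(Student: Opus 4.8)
\emph{Sufficiency.} The plan for the ``if'' direction is a direct verification, so I would dispatch it first. Given the data $(\cle,\Pi,M_{\Phi_1},M_{\Phi_2})$ with $N=\Pi^*\Pi$ and $\Pi T_i=M_{\Phi_i}^*\Pi$, positivity of $N$ is automatic, and taking adjoints gives $T_i^*\Pi^*=\Pi^* M_{\Phi_i}$, whence $T_i^*NT_i=\Pi^*M_{\Phi_i}M_{\Phi_i}^*\Pi\le\Pi^*\Pi=N$ because each $M_{\Phi_i}$ is an isometry. For the purity I would use the BCL relations $M_{\Phi_1}M_{\Phi_2}=M_{\Phi_2}M_{\Phi_1}=M_z$ (which follow from $\Phi_1\Phi_2=\Phi_2\Phi_1=z$): then $\Pi P_T=\Pi T_1T_2=M_z^*\Pi$, so $P_T^{*k}NP_T^{k}=\Pi^* M_z^{k}M_z^{*k}\Pi$. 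Since $M_z$ is the pure shift on $H^2_{\cle}(\D)$, the projections $M_z^{k}M_z^{*k}$ tend to $0$ in SOT, giving $P_T^{*k}NP_T^{k}\to0$ in SOT, which by the remark preceding the theorem is equivalent to $T^{*\alpha}NT^{\alpha}\to0$. Hence $N\in\clp\cll_+(T)$.

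\emph{Necessity.} The plan is to pass to the Hilbert space induced by $N$. Equip $\clh$ with the semi-inner product $\langle h,h'\rangle_N=\langle Nh,h'\rangle$, quotient out its kernel and complete to obtain a Hilbert space $\clh_N$ together with the canonical map $q:\clh\to\clh_N$, which satisfies $q^*q=N$ and $\|qh\|^2=\langle Nh,h\rangle$. The inequalities $T_i^*NT_i\le N$ say precisely that each $T_i$ is a contraction for $\langle\cdot,\cdot\rangle_N$, so $T_i$ descends to a contraction $\widehat{T}_i$ on $\clh_N$ with $qT_i=\widehat{T}_i q$; these commute since the $T_i$ do. Writing $\widehat{P}=\widehat{T}_1\widehat{T}_2$, the defining condition of $\clp\cll_+(T)$ yields $\|\widehat{P}^{k}qh\|^2=\langle P_T^{*k}NP_T^{k}h,h\rangle\to0$, so $\widehat{P}^{k}\to0$ in SOT on the dense set $q\clh$ and hence on all of $\clh_N$; that is, $\widehat{P}$ is a pure ($C_{0\cdot}$) contraction.

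The crux is now to extend the commuting pair $(\widehat{T}_1,\widehat{T}_2)$ to a BCL pair of co-isometries. Because $\widehat{P}$ is pure, its minimal isometric dilation is the shift $M_z$ on $H^2_{\cle}(\D)$ with $\cle$ a defect space of $\widehat{P}$, realized by the power-series embedding; combining this with Andô's theorem for the pair (the existence of a commuting isometric dilation) and the Berger--Coburn--Lebow model (\cite{Ando,BCL}), I expect to obtain a BCL pair $(M_{\Phi_1},M_{\Phi_2})$ with product $M_z$ and an isometry $\widehat{\Pi}:\clh_N\to H^2_{\cle}(\D)$ with $\widehat{\Pi}\widehat{T}_i=M_{\Phi_i}^*\widehat{\Pi}$, the BCL structure forcing $\Phi_1(z)=(P+zP^\perp)U^*$ and $\Phi_2(z)=U(P^\perp+zP)$ for a unitary $U$ and a projection $P$ on $\cle$. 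Setting $\Pi=\widehat{\Pi}q$ then gives $\Pi^*\Pi=q^*\widehat{\Pi}^*\widehat{\Pi}q=q^*q=N$ and $\Pi T_i=\widehat{\Pi}qT_i=\widehat{\Pi}\widehat{T}_i q=M_{\Phi_i}^*\widehat{\Pi}q=M_{\Phi_i}^*\Pi$, which is the asserted factorization. This reduction to Andô's theorem is exactly the main obstacle and the reason $n=2$ must be separated from $n>2$: for three or more commuting contractions the required commuting isometric dilation need not exist (\cite{Par}), so the pure-product argument breaks down. Finally, for the ``in particular'' statement, if $N$ is a contraction then $\Pi^*\Pi=N\le I$ makes $\Pi$ a (nonzero, when $N\ne0$) contraction, and since the $M_{\Phi_i}^*$ are co-isometries with $\Pi T_i=M_{\Phi_i}^*\Pi$, the triple $(\Pi,H^2_{\cle}(\D),(M_{\Phi_1}^*,M_{\Phi_2}^*))$ is a co-isometric pseudo-extension of $T$ in the sense of Definition~\ref{pseudo_D1}.
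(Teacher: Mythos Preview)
Your sufficiency argument coincides with the paper's. For necessity you take a genuinely different route. The paper works directly with the defect operators $R^2=N-P_T^*NP_T$ and $R_i^2=N-T_i^*NT_i$, defines the bounded map $\pi h=\sum_{k\ge0}z^kRP_T^kh$ into $H^2_{\clr}(\D)$ (so that $\pi^*\pi=N$), and then \emph{constructs} the unitary $U$ and projection $P$ by hand from the identity $R^2=R_1^2+T_1^*R_2^2T_1=T_2^*R_1^2T_2+R_2^2$, feeding the resulting block unitaries $W_1,W_2$ into the transfer-function Lemma~\ref{sb3} to obtain $\Pi T_i=M_{\Phi_i}^*\Pi$. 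You instead pass to the semi-inner-product completion $\clh_N$, observe that the induced pair $(\widehat T_1,\widehat T_2)$ has $\widehat P{}^{\,k}\to0$, and outsource the model to And\^o plus BCL. This is a valid reduction (it is essentially the appeal to~\cite{BSS} that the paper mentions immediately after the statement), but two points need tightening. First, it is $\widehat P{}^{\,*}$, not $\widehat P$, whose minimal isometric dilation is the shift, since $(\widehat P{}^{\,*})^{*k}=\widehat P{}^{\,k}\to0$; you are dilating $(\widehat T_1^{\,*},\widehat T_2^{\,*})$. Second, an And\^o dilation $(V_1,V_2)$ may have a unitary summand in $V_1V_2$, so ``And\^o $+$ BCL'' is not automatic: you must use the BCL fact that the Wold decomposition of $V_1V_2$ reduces each $V_i$, and then note that $\widehat P{}^{\,k}\to0$ forces $q(\clh)$ into the shift part, whence the restriction is a BCL pair of the stated form. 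What the paper's explicit construction buys is concrete formulas for $U$ and $P$ in terms of the defect data (exploited in the example following the proof) and a uniform template via Lemma~\ref{sb3} that extends verbatim to the $n>2$ case of Theorem~\ref{M_theta}; what your approach buys is brevity and a clean conceptual picture, at the price of invoking And\^o and hence being confined to $n=2$.
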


 We prove this theorem in Section \ref{se3} as Theorem~\ref{Re: 2_M_theta}. The above theorem is related to isometric dilations of pairs of commuting contractions. Recall that a commuting $n$-tuple of isometries $V=(V_1,\dots,V_n)$ on $\clk$ is an isometric dilation of an $n$-tuple of commuting contractions $T=(T_1,\dots,T_n)$ on $\clh$ if there is an isometry $\Pi:\clh\to \clk$ such that  
 \[
\Pi T_i^{*}= V_i^*\Pi
 \]
 for all $i=1,\dots,n$.  Here we say that an operator $A\in \clb(\clh)$ is pure if $A^{* k}\to 0$ in SOT as $k\to \infty$. Let $T=(T_1,T_2)$ be a pair of commuting contractions such that the product contraction $P_T=T_1T_2$ is pure. Then it can be easily verified that $I\in \clp\cll_+(T^*)$. Therefore, by applying the above theorem for $N=I$, we see that $(T_1,T_2)$ dilates to a BCL pair of isometries $(M_{\Phi_1}, M_{\Phi_2})$. Such a dilation result is known (see ~\cite{BSS}). In fact, the explicit construction of $\Phi_1$ and $\Phi_2$ obtained in the above theorem uses techniques found in ~\cite{BSS}.

 For an $n$ ($n>2$)-tuples of commuting contractions $T$, we lose commutativity in the characterization of $\clp\cll_+(T)$. We prove that any contractive positive pure lower $T$-Toeplitz operator can be factorized in terms of a BCL type of non-commuting co-isometric pseudo-extension of $T$. 
\begin{Theorem}\label{M_theta}
Let $T=(T_1, \ldots,T_n)\in \clb(\clh)^n$ be an $n$-tuple of commuting contractions with $n>2$. Then $N\in \clp \cll_+(T)$ if and only if there exist a Hilbert space $\cle$, a bounded operator $\Pi:\clh \to H^2_{ \cle}(\mathbb{D})$, and an $n$-tuple of isometries $M=(M_{\Theta_1}, \ldots, M_{\Theta_n})$ on $H^2_{ \cle}(\mathbb{D})$  such that 
\[N=\Pi^* \Pi\, \text{and } \Pi T_i= M^*_{\Theta_i}\Pi  
  \quad (i=1,\ldots,n),\]
where for each $i=1,\ldots,n$, 
$\Theta_i(z)=(P_i+zP_i^{\perp})U_i^*$ for some unitary $U_i\in \clb(\cle)$ and orthogonal projection $P_i\in\clb(\cle)$.
Moreover, 
\[
M^*_{z} \Pi=M^*_{\Theta_1}M^*_{\Theta_2} \dots M^*_{\Theta_n}\Pi.
\]

In particular, if $N\in \clp \cll_+(T)$ is a contraction, then $(\Pi, H^2_{ \cle}(\mathbb{D}), (M^*_{\Theta_1},\ldots, M^*_{\Theta_n}))$ is a non-commuting co-isometric pseudo-extension of $T$.
\end{Theorem}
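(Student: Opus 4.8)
The plan is to prove the two implications separately, treating the direction ``factorization exists $\Rightarrow$ membership'' as routine and concentrating the work on the converse construction.

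For the easy direction, suppose $\cle$, $\Pi$, and $M=(M_{\Theta_1},\dots,M_{\Theta_n})$ are given. Positivity of $N=\Pi^*\Pi$ is immediate. Each $\Theta_i$ is inner, since for $|z|=1$ one computes $\Theta_i(z)^*\Theta_i(z)=U_i(P_i+\bar z P_i^\perp)(P_i+zP_i^\perp)U_i^*=U_i(P_i+P_i^\perp)U_i^*=I_\cle$, so $M_{\Theta_i}$ is an isometry and $M_{\Theta_i}M_{\Theta_i}^*\le I$. Hence $T_i^*NT_i=(\Pi T_i)^*(\Pi T_i)=\Pi^*M_{\Theta_i}M_{\Theta_i}^*\Pi\le\Pi^*\Pi=N$, so $N$ is a positive lower $T$-Toeplitz operator. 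For purity, the intertwiners telescope: $\Pi P_T=\Pi T_1\cdots T_n=M_{\Theta_1}^*\cdots M_{\Theta_n}^*\Pi=M_z^*\Pi$ by the moreover identity, whence $\Pi P_T^{k}=M_z^{*k}\Pi$ and $P_T^{*k}NP_T^{k}=\Pi^*M_z^{k}M_z^{*k}\Pi$. Since $M_z$ is a pure isometry on $H^2_\cle(\D)$, $M_z^{k}M_z^{*k}\to0$ in SOT, so $P_T^{*k}NP_T^{k}\to0$ in SOT, which by the equivalence recorded above is exactly $T^{*\alpha}NT^{\alpha}\to0$. Thus $N\in\clp\cll_+(T)$.

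For the converse, first reduce to a single-variable Hardy model driven by the product contraction $P_T=T_1\cdots T_n$. Iterating $T_i^*NT_i\le N$ gives $P_T^*NP_T\le N$, and the defining convergence gives $P_T^{*k}NP_T^{k}\to0$ in SOT, so $N$ is a positive pure lower $P_T$-Toeplitz operator in one variable. Set $D_*=(N-P_T^*NP_T)^{1/2}\ge0$, $\cle=\overline{\operatorname{ran}}\,D_*$, and define $\Pi h=\sum_{k\ge0}(D_*P_T^{k}h)z^k$. A telescoping sum gives $\|\Pi h\|^2=\sum_{k}\langle(N-P_T^*NP_T)P_T^k h,P_T^k h\rangle=\langle Nh,h\rangle$, so $\Pi^*\Pi=N$, and comparing Fourier coefficients gives $M_z^*\Pi=\Pi P_T$.

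The crux is to split $M_z^*$ along $P_T=T_1\cdots T_n$, realizing each $T_i$ as $M_{\Theta_i}^*$ through $\Pi$. A natural candidate $Y_i$ on $\operatorname{ran}\Pi$ is $Y_i\Pi h=\Pi T_ih$: this is well defined and contractive because $\|\Pi T_ih\|^2=\langle T_i^*NT_ih,h\rangle\le\langle Nh,h\rangle=\|\Pi h\|^2$, and using $T_iP_T=P_TT_i$ one checks $Y_iM_z^*\Pi=\Pi T_iP_T=M_z^*\Pi T_i=M_z^*Y_i\Pi$, so $Y_i$ commutes with $M_z^*$ on the backward-shift-invariant subspace $\overline{\operatorname{ran}}\,\Pi$. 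To pin down the precise BCL symbol $\Theta_i(z)=(P_i+zP_i^\perp)U_i^*$, I would instead construct explicitly a unitary $U_i$ and a projection $P_i$ on $\cle$ satisfying the coefficient identity $D_*T_i=U_i(P_iD_*+P_i^\perp D_*P_T)$ on $\clh$, which via the computation $(M_{\Theta_i}^*g)_k=U_i(P_ig_k+P_i^\perp g_{k+1})$ is precisely equivalent (reducing the general coefficient to $k=0$ by $T_iP_T^{k}=P_T^{k}T_i$) to $\Pi T_i=M_{\Theta_i}^*\Pi$. The existence of such $U_i,P_i$ I would establish by a Douglas-type factorization on the defect space, exploiting $P_T=T_i\hat P_i=\hat P_iT_i$ with $\hat P_i=\prod_{j\ne i}T_j$ and following the transfer-function techniques of \cite{BSS}; the degree-one inner shape of $\Theta_i$ then makes $M_{\Theta_i}$ an isometry. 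I expect this identity, together with the verification that the constructed $U_i,P_i$ are genuinely unitary and idempotent, to be the main obstacle. Once it holds, the product relation $M_z^*\Pi=M_{\Theta_1}^*\cdots M_{\Theta_n}^*\Pi$ follows by the same telescoping as in the easy direction.

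Finally, I would address the distinctive $n>2$ feature and the pseudo-extension clause. The relations $T_iT_j=T_jT_i$ constrain only the \emph{ordered} product $\Theta_1\cdots\Theta_n$ (through $M_z$), not the individual pairs $M_{\Theta_i}M_{\Theta_j}$; for $n=2$ Andô's theorem forces a commuting isometric lift, whereas Parrott's example rules this out in general for $n>2$, so the tuple $(M_{\Theta_1},\dots,M_{\Theta_n})$ is genuinely non-commuting. For the ``in particular'' statement, if $N\le I$ then $\Pi^*\Pi=N\le I$ shows $\Pi$ is a contraction and each $M_{\Theta_i}^*$ is a co-isometry, so $(\Pi,H^2_\cle(\D),(M_{\Theta_1}^*,\dots,M_{\Theta_n}^*))$ meets Definition~\ref{pseudo_D1} as a non-commuting co-isometric pseudo-extension of $T$.
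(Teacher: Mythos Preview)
Your outline follows the paper's strategy closely: the easy direction is exactly as in the paper, and your Hardy--space model $\Pi h=\sum_{k\ge 0}(D_*P_T^{k}h)z^k$ with $D_*^2=N-P_T^*NP_T$ is the paper's map $\pi$. You also correctly isolate the key coefficient identity $D_*T_i=U_i(P_iD_*+P_i^\perp D_*P_T)$ as equivalent to $\Pi T_i=M_{\Theta_i}^*\Pi$.

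The genuine gap is that you fix $\cle=\overline{\operatorname{ran}}\,D_*$ at the outset and then seek a \emph{unitary} $U_i$ and a projection $P_i$ on this particular space. In general no such unitary exists on $\overline{\operatorname{ran}}\,D_*$: the relevant map one gets from the factorization $P_T=T_i\hat P_i$ is only an isometry between proper subspaces, and extending it to a unitary forces one to enlarge the coefficient space. The paper handles this concretely. It introduces the auxiliary defects $R_i^2=N-T_i^*NT_i$ and $\tilde R_i^{\,2}=N-\hat P_i^*N\hat P_i$, uses the identity
\[
R^2=R_i^2+T_i^*\tilde R_i^{\,2}T_i=\hat P_i^*R_i^2\hat P_i+\tilde R_i^{\,2}
\]
to build an isometry $\tilde U_i$ and an isometric embedding $V_i:\clr\to\cld_i\oplus\clr_i\oplus\tilde\clr_i$, then pads by further summands $\cld,\cle_i$ so that all the $V_i$ extend to unitaries $\tilde V_i:\clr\oplus\cld\to\cls_i$ with a \emph{common} domain. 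Only after this enlargement does one set $\cle=\clr\oplus\cld$, view $\Pi=\pi$ as landing in $H^2_\cle(\D)$, and transport the BCL data back via $P_i=\tilde V_i^*\tilde P_i\tilde V_i$, $U_i=\tilde V_i^*\tilde U_i\tilde V_i$. The actual intertwining $\Pi T_i=M_{\Theta_i}^*\Pi$ is then read off from the transfer--function Lemma~\ref{sb3} applied to a unitary $W_i$ built from $\tilde U_i$, $\tilde P_i$ and the obvious inclusions. Your reference to \cite{BSS} is apt, but the step you flag as ``the main obstacle'' is precisely this enlargement--and--conjugation argument; a bare Douglas factorization on $\overline{\operatorname{ran}}\,D_*$ will not produce a unitary. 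Once you allow $\cle$ to grow, your plan and the paper's proof coincide.
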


This theorem is proved in Section \ref{se3} as Theorem~\ref{Re: M_theta}. The main difference with Theorem~\ref{2_M_theta} is that the $n$-tuple of BCL type of isometries $(M_{\Theta_1},\dots,M_{\Theta_n})$ on $H^2_{\cle}(\D)$ are not commuting, in general. As before, if we assume that the product contraction $P_T=T_1\cdots T_n$ is pure, then $I\in \clp\cll_+(T^*)$ and the $n$-tuple of isometries $(M_{\Theta_1},\dots,M_{\Theta_n})$ on $H^2_{\cle}(\D)$ gives a Halmos dilation of $T$, that is, for each $i=1,\dots,n$, $M_{\Theta_i}$ is an isometric dilation of $T_i$.

To generalize the abstract notion of Toeplitz operators in a more general setting, R. G. Douglas (\cite{DG}) considered the following operator identity 
\[S^* X T=X \ (X\in \clb(\clh, \clk)),\]
for two contractions $S\in \clb(\clk)$ and $T\in\clb(\clh)$. A lot of research is devoted to studying solutions of such an operator identity, see \cite{GC, DG, Ptv} and references therein. In the multi-variable setting, considering two $n$-tuples of commuting contractions $S=(S_1,\ldots,S_n)\in \clb(\clk)^n$ and $T=(T_1, \ldots, T_n)\in \clb(\clh)^n$, we have also studied the following system of operator equations
\begin{align*}
    S^*_i X T_i =X \quad  \quad (i=1,\ldots,n),
\end{align*}
where $X\in \clb(\clh, \clk)$. The space of all solutions of the above operator equations is denoted by $\mathcal T(S,T)$ and we call elements in $\mathcal T(S,T)$  as  $(S, T)$-Toeplitz operators. 
In Section \ref{Sec4}, a general construction of $(S,T)$-Toeplitz operators in terms of Banach limits is given (see Theorem \ref{B_limt}). In this context, it is a natural question to ask when the space $\mathcal T(S,T)$ is non-trivial? Satisfactory answer to this question is not known. However, we find some necessary conditions in terms of canonical isometric pseudo-extension of $S$ and $T$ for the space $\mathcal T(S,T)$ to be non-trivial. In particular, we show that if $\mathcal T(S,T)$ is non-trivial then $S$ and $T$ have canonical isometric pseudo-extensions. One could then ask the following immediate question: If $\mathcal T(S,T)$ is non-trivial and if $(\clj_T, \clq_T,V)$ and $(\clj_S,\clq_S,W)$ are canonical isometric pseudo-extensions of $T$ and $S$, respectively, then how $\mathcal T(S,T)$ and $\mathcal T(W,V)$ are related? The answer to this question is rather neat as we show that 
$$\T(S,T)=\clj_S^* \T(W, V) \clj_T.$$ 
This is proved in Theorem~\ref{Th2}.

The paper has four sections and the plan of the paper is as follows. In Section 2, we study $T$-Toeplitz operators. The study of upper and lower $T$-Toeplitz operators is contained in Section 3. In the last section we consider $(S, T)$-Toeplitz operators.

  \newsection{$T$-Toeplitz operators}
Let $T=(T_1,\dots,T_n)$ be an $n$-tuple of commuting contractions on $\clh$. Recall that an operator $X\in \clb(\clh)$ is a $T$-Toeplitz operator if  
\[
T_i^*XT_i=X
\]
for all $i=1,\dots,n$.
The aim of this section is to study various properties of $T$-Toeplitz operators. We begin with the equivalent criteria for the space of all $T$-Toeplitz operators $\mathcal T(T)$ to be non-trivial. This result is proved in Theorem 1.3 of \cite{BTH} and can also be proved using results in ~\cite{PM}. A similar kind of result is also proved in an another context in \cite{BDS}. We include a proof here as we shall use the construction in the proof of $(2)\implies (3)$ in the latter section. 


\begin{Theorem}[cf. ~\cite{BTH}]\label{non-zero Toeplitz}
 Let $T = (T_1, \ldots, T_n)$ be an $n$-tuple of commuting contractions on $\clh$. Then the following are equivalent.
\begin{itemize}
    \item[(1)] $\T(T)$ is non-zero.
\item[(2)] The adjoint of $P$ is not pure, where $P=T_1\cdots T_n$.
\item[(3)] There exists a canonical isometric pseudo-extension of $T$.
\end{itemize}
\end{Theorem}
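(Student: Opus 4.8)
The plan is to prove the cyclic chain of implications $(1)\Rightarrow(2)\Rightarrow(3)\Rightarrow(1)$, working with the product contraction $P=T_1\cdots T_n$ throughout, since the $T$-Toeplitz condition $T_i^*XT_i=X$ for all $i$ forces $P^*XP=X$, which is the single-variable Toeplitz condition for $P$. The central analytic object is the SOT-limit $A=\text{SOT-}\lim_{k\to\infty}P^{*k}P^k$, which exists because the sequence $\{P^{*k}P^k\}$ is decreasing (as $P$ is a contraction) and bounded below by $0$; this positive contraction $A$ satisfies $P^*AP=A$ and its kernel is exactly the set of vectors $h$ with $\|P^kh\|\to0$, i.e.\ $\ker A$ is where $P$ acts purely.

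\smallskip
\noindent\textbf{The implication $(2)\Rightarrow(3)$.} This is the constructive heart of the argument, and it is the step the author flags as being reused later, so I would develop it carefully. Assuming $P^*$ is not pure is equivalent to asserting that the analogous limit $A_*=\text{SOT-}\lim_{k}P^kP^{*k}$ is non-zero; equivalently $A=\text{SOT-}\lim_k P^{*k}P^k\neq0$. I would then build the canonical pseudo-extension directly from $A$. The recipe is to take the positive operator $A$, consider the range/kernel decomposition, and use $A$ as the Gram operator: define $\clj$ on $\clh$ by $\langle \clj h,\clj h'\rangle_{\clk}=\langle Ah,h'\rangle_{\clh}$ after passing to the quotient by $\ker A$ and completing, so that $\clj^*\clj=A$ by construction. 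The identity $P^*AP=A$ translates into the existence of an isometry (on the product level) and, more delicately, of the individual isometries $V_i$ satisfying $\clj T_i=V_i\clj$. Concretely, since $A$ is $T$-Toeplitz for each coordinate (one checks $T_i^*AT_i=A$ from the commuting structure and the definition of the SOT-limit), each $T_i$ descends through $\clj$ to a well-defined isometry $V_i$ on the completed space, and one enlarges $\clk$ to the minimal joint reducing subspace for $V=(V_1,\dots,V_n)$ containing $\clj\clh$ to obtain minimality. The condition $\clj^*\clj=A=\text{SOT-}\lim_k P^{*k}P^k$ is then exactly the canonicality requirement.

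\smallskip
\noindent\textbf{The remaining implications.} For $(3)\Rightarrow(1)$ I would simply observe, as the introduction already notes, that for any isometric pseudo-extension $(\clj,\clk,V)$ the operator $\clj^*\clj$ is a positive $T$-Toeplitz operator: from $\clj T_i=V_i\clj$ and $V_i^*V_i=I$ we get $T_i^*\clj^*\clj T_i=\clj^*V_i^*V_i\clj=\clj^*\clj$; and $\clj\neq0$ forces $\clj^*\clj\neq0$, so $\T(T)$ is non-zero. For $(1)\Rightarrow(2)$ I would argue by contraposition: if $P^*$ is pure then $A_*=0$, and I would show this forces every $T$-Toeplitz operator to vanish. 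If $X$ is $T$-Toeplitz then $P^*XP=X$, hence $X=P^{*k}XP^k$ for all $k$; writing $\langle Xh,h'\rangle=\langle XP^kh,P^kh'\rangle$ and using purity of $P$ (which follows from purity of $P^*$ together with the relation $\|A\|=\|A_*\|$, i.e.\ $A=0\Leftrightarrow A_*=0$ for the two SOT-limits of a contraction) to send $P^kh\to0$ shows $\langle Xh,h'\rangle=0$, so $X=0$.

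\smallskip
The step I expect to be the main obstacle is the descent of the \emph{individual} contractions $T_i$ to isometries $V_i$ in $(2)\Rightarrow(3)$: while the product $P$ descends cleanly because the limit operator $A$ is adapted to $P$, verifying that each coordinate $T_i$ is well-defined and isometric on the quotient-completion requires checking the per-coordinate Toeplitz identity $T_i^*AT_i=A$, which does not follow formally from $P^*AP=A$ alone and must be extracted from the commutativity of the tuple together with the structure of the SOT-limit. Establishing that identity, and then confirming that the enlargement to the minimal joint reducing subspace preserves both commutativity and the isometry property of each $V_i$, is where the real care is needed.
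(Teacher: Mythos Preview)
Your overall cyclic scheme matches the paper's, and $(3)\Rightarrow(1)$ is identical. Two points deserve attention.

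\textbf{A genuine error in $(1)\Rightarrow(2)$.} You write that purity of $P^*$ gives $A_*=\text{SOT-}\lim_k P^kP^{*k}=0$, and then invoke ``$A=0\Leftrightarrow A_*=0$'' to conclude $P^k\to0$. The equivalence $A=0\Leftrightarrow A_*=0$ is \emph{false}: for the unilateral shift $P=M_z$ one has $A=I$ while $A_*=0$. The detour is also unnecessary. In the paper's convention an operator $B$ is pure when $B^{*k}\to0$ in SOT, so ``$P^*$ is pure'' literally means $P^k\to0$ in SOT, i.e.\ $A=0$ directly. With that read correctly, your estimate $\|Xh\|=\|P^{*k}XP^kh\|\le\|X\|\,\|P^kh\|\to0$ is exactly the paper's argument.

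\textbf{A mild divergence in $(2)\Rightarrow(3)$.} You plan to verify the coordinate identity $T_i^*AT_i=A$ and then let each $T_i$ descend to an isometry $V_i$ on $\overline{\text{Ran}}\,A^{1/2}$. That identity does hold, via the squeeze
\[
P^{*(k+1)}P^{k+1}\ \le\ P^{*k}T_i^*T_iP^k\ \le\ P^{*k}P^k
\]
and passing to the SOT limit; the paper itself uses this equality later (see the argument for the compact-$Q_T$ corollary). However, in the proof of the theorem the paper takes a slightly different route: it proves only the inequality $T_i^*Q_T^2T_i\le Q_T^2$, applies Douglas' lemma to obtain \emph{contractions} $V_i$ with $V_iQ_T=Q_TT_i$, observes that their product $V_1\cdots V_n$ equals the isometry $Q_Th\mapsto Q_TPh$, and concludes from this that each $V_i$ must itself be an isometry. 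Your approach is a bit more direct once the equality is in hand; the paper's approach avoids the squeeze but needs the product trick. Either way, commutativity of the $V_i$ follows immediately from commutativity of the $T_i$ and density of $Q_T\clh$ in $\clq_T$, so no separate ``enlargement to a minimal joint reducing subspace'' is required: $\clq_T=\overline{\text{Ran}}\,Q_T$ already is the whole space on which the $V_i$ act, hence trivially minimal.
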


\begin{proof}[\text{Proof of} $(1)\implies (2)$:]
We prove by the contradictory method. Let us assume that the adjoint of $P$ is pure. If $X$ is a non-zero element in $\T(T)$,
then 
\begin{align} \label{pure_1}
    P^{* k}X P^k=X \quad (k\in \Z_+).
\end{align}
Now, for $h \in \clh$, 
\begin{align*}
    \|X h\|&=\|P^{*k} X P^k h\|
     \leq \|P^{* k}\| \|X\| \|P^k h\|
    \leq \|X\| \|P^k h\| 
     \to 0 \quad \text{as } k\to \infty.  
\end{align*}
Therefore, $X=0$, which contradicts that $X$ is a non zero operator.

\noindent\text{\textit{Proof of}} $(2)\implies (3)$:
Let us assume that  $P^k \nrightarrow 0$ in SOT. Then the positive operator $Q_T$, defined by
 \begin{align}\label{Q_T}
     Q^2_T=\text{SOT}-\lim_{k\to \infty} P^{* k}P^k,
     \end{align}
 is non-zero and satisfies 
\[P^* Q_T^2P=Q_T^2.\]
 By the above identity, we have an isometry $\mathsf{V}: \clq_T \to \clq_T$, defined by \[ Q_T h \mapsto Q_T P h\quad (h\in\clh),\] 
 where $ \clq_T=\overline{\text{Ran}}\, Q_T$.
Since for each $i=1,\ldots,n$, $T_i$ is a contraction, we have 
 \[\langle T^*_i Q_T^2 T_i h, h\rangle= \lim_{k\to \infty} \langle P^{*k}(T^*_i T_i)P^k h, h\rangle \leq \lim_{k\to \infty} \langle P^{*k}P^k h,h\rangle =\langle Q_T^2 h,h\rangle.\]
Therefore,
\[T^*_i Q_T^2 T_i \leq Q_T^2\quad (i=1,\ldots,n).\]
By virtue of the Douglas' factorization lemma (\cite{D}), we obtain a contraction $V_i: \clq_T \to \clq_T$ satisfying the following relation for $h\in \clh$,
\[V_i Q_T h=Q_T T_i h \quad (i=1,\ldots, n).\]
Using the commutativity of the tuple $T$, we have for $i,j=1, \ldots,n$ and $h \in \clh$,
\[V_i V_j Q_T h=V_i Q_T T_j h=Q_T T_i T_j h=V_j Q_T T_i h=V_j V_i Q_T h. \]
Therefore, $(V_1,\ldots,V_n)$ is an $n$-tuple of commuting contractions. Also, since $P=T_1 \cdots T_n$, a similar computation as above yields
\[\mathsf{V}=V_1 \cdots V_n.\]
The above identity together with the fact that $\mathsf{V}$ is an isometry implies each $V_i$ is an isometry. 
Finally, consider the map $\clj_T : \clh \to \clq_T $, defined by $\clj_T h = Q_T h$  for $h\in \clh$. It is now clear that $\clj_T$ is a contraction,
 \begin{align}\label{Cano_qus}
     \clj_T^* \clj_T= \text{SOT}-\lim_{k \to \infty} P^{* k}P^k \quad \text{and}\quad V_i \clj_T=\clj_T T_i \quad (i=1,\ldots,n).
 \end{align}
 Thus, $(\clj_T, \clq_T, (V_1,\dots,V_n))$ is a canonical isometric pseudo-extension of $T$.
 
 \noindent\text{\textit{Proof of}} $(3)\implies (1)$: Let $(\clj, \clk, W=(W_1, \ldots, W_n))$ be an isometric pseudo-extension of $T$. Now, for $i=1,\ldots,n$,
 \[T^*_i \clj^* \clj T_i=\clj^* W^*_i W_i \clj=\clj^* \clj.\]
Hence, $\clj^*\clj$ is a non-zero $T$-Toeplitz operator.
\end{proof}

From the above theorem, one observes the following.
\begin{Remarks}\label{Remark_positive} $\textup{(i)}$ If $R$ is a contractive positive $T$-Toeplitz operator, then for $k\in \Z_+$, 
\[R=P^{* k} R P^k \leq P^{* k} P^k.\]
Therefore, $R \leq \clj_T^* \clj_T$. And, if $R$ is a $T$-Toeplitz operator such that $R \geq I$, then $\clj_T^* \clj_T \leq R$.

$\textup{(ii)}$
The existence of a non-zero $T$-Toeplitz operator is equivalent to the existence of a positive $T$-Toeplitz operator of the form $\clj^*_T \clj_T$, where $(\clj_T, \clq_T, (V_1,\dots,V_n))$ is a canonical isometric pseudo-extension of $T$.
     
\end{Remarks}

We now proceed to prove Theorem~\ref{Fact_To} which says that any contractive positive $T$-Toeplitz operator is of the form $\clj^*\clj$ for some isometric pseudo-extension $(\clj, \clk, V)$ of $T$. 

\begin{proof}[Proof of Theorem \ref{Fact_To}]
Since $R$ is a positive $T$-Toeplitz operator. Therefore, for $i=1, \ldots ,n$, $T^*_i R^{1/2} R^{1/2} T_i=R^{1/2} R^{1/2}$, and thus,
\[\|R^{1/2}T_i h\|=\|R^{1/2} h\| \quad ( h \in \clh).\] We set $\mathcal R:=\overline{\text{Ran}}\, R^{1/2}$ and consider for each $i=1,\ldots,n$, the isometry $V_i:\mathcal R \to \mathcal R$, defined by $R^{1/2}h \mapsto R^{1/2} T_i h$ $(h \in \clh)$. 
The commutativity of the tuple $T$ reveals that the tuple $V:=(V_1,\ldots, V_n)\in \clb(\mathcal R)^n$ is a commuting $n$-tuple of isometries.
Then the bounded operator $\clj: \clh \to \mathcal R$, defined by $h \mapsto R^{1/2}h$ ($h \in \clh$), satisfies  
\[\clj^*\clj=R\ \text{and }\clj T_i h=R^{1/2} T_i h=V_i R^{1/2}h=V_i \clj h\quad (h \in \clh),\]
for all $i=1,\dots,n$.
 Conversely, if there is a bounded operator $\clj: \clh \to \clk$ and an $n$-tuple of isometries $V=(V_1,\dots,V_n)$ on $\clk$ satisfying $\clj T_i= V_i \clj$ ($i=1,\ldots,n$) and $\clj^* \clj =R$, then for $i=1,\ldots,n$, 
 \[T^*_i R T_i=T^*_i \clj^* \clj T_i=\clj^* V_i^* V_i \clj=\clj^* \clj=R.\]
 Hence, $R=\clj^* \clj$ is a $T$-Toeplitz operator. Moreover, if $R$ is a contraction, then $\clj$ will be a contraction. This completes the proof.
 \end{proof}

We illustrate the above theorem by considering a simple example.
\begin{Example}
   For $n=2$ and $\clh=\mathbb C^l$ ($l\geq 3$). We consider a pair of commuting operators $T=(T_1, T_2)$ on $\mathbb C^l$, defined by
   \[ T_1(e_i)=\begin{cases} 
      e_i & i=1,\ldots, k \\
      0 & i=k+1,\ldots, l
   \end{cases} \quad \text{and} \quad T_2(e_i)=\begin{cases} 
      0 & i=1,\ldots, m-1 \\
      e_i & i=m,\ldots, l
   \end{cases}
\]
where $1<m\leq k <l$ and $\{e_i\}_{i=1}^l$ is the standard orthonormal basis for $\mathbb C^l$, that is, for $i=1,\ldots, l$, $e_i=(0,\ldots,0,\underbrace{1}_{i\text{th place}},0,\ldots,0)$.
Then it is easy to check that for $a=(a_m, \ldots, a_k)$ and $a_i\geq 0$ ($i=m, \ldots, k$), the operator $X_a$, defined by
\[ X_a(e_i)=\begin{cases} 
      0 & i=1,\ldots, m-1 \\
      a_i e_i & i=m,\ldots, k\\
      0 & i=k+1, \ldots, l
   \end{cases}
\]
 is a positive $T$-Toeplitz operator.
 Then the map $\clj:\mathbb C^l \to \mathbb C^{k-m+1}$, defined by
 \[\clj \begin{bmatrix}
     z_1\\
     \vdots\\
     z_l
 \end{bmatrix}=\begin{bmatrix}
     a^{1/2}_m z_m\\
     \vdots\\
     a^{1/2}_k z_k
 \end{bmatrix},\]
 and for each $i=1,\ldots, l$, the isometry $V_i=I_{\mathbb C^{k-m+1}}$ on $\mathbb C^{k-m+1}$ satisfy
 \[\clj^* \clj=X_a \, \text{ and }\, \clj T_i=V_i \clj \,\, (i=1,2).\]
 
\end{Example}

In the rest of the section, we study compact $T$-Toeplitz operators. 
Before proceeding further, for a contraction $S\in \clb(\clh)$, let us consider a subspace of $\clh$, denoted by $\clu_S$ and defined as follows 
$$\clu_S=\overline{\text{Span}}\{h\in \clh: Sh=\alpha h : |\alpha|=1\}.$$


\begin{Proposition}\label{Pro_Compt}
Let $T=(T_1, \ldots,T_n)\in \clb(\clh)^n$  be an $n$-tuple of commuting contractions with $P=T_1\cdots T_n$.
If a compact operator $X$ belongs to $\T(T)$, then 
\item 
(a) $X$ and $X^*$ commute with each $T_i$ and $P$, and
\item
(b) Each $\clu_{T_i}$ and $\clu _P$ is reducing subspace for $X$ and $X|_{\clu_{T_i}^{\perp}}=0$, $X|_{\clu_{P}^{\perp}}=0$.
\end{Proposition}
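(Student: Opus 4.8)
The plan is to reduce the whole statement to a one-variable assertion and then settle that by a compactness argument on the top singular subspace of $X$. First I would record the reduction. Since $T_i^* X T_i = X$ for every $i$, peeling the factors of $P = T_1\cdots T_n$ from the inside out gives $P^* X P = X$ as well; thus $X$ is simultaneously an $S$-Toeplitz operator in the one-variable sense $S^* X S = X$ for every $S\in\{T_1,\ldots,T_n,P\}$. Taking adjoints in $S^* X S = X$ gives $S^* X^* S = X^*$, so $X^*$ is $S$-Toeplitz too. Hence it suffices to prove the following one-variable statement and apply it to each such $S$: \emph{if $S\in\clb(\clh)$ is a contraction and $Y\in\clb(\clh)$ is compact with $S^* Y S = Y$, then $Y$ and $Y^*$ commute with $S$ and $S^*$, $\clu_S$ reduces $Y$, and $Y|_{\clu_S^\perp}=0$.} (Commutation of $X$ with $P$ is then automatic from commutation with each $T_i$.)

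The heart of the matter is the key lemma: \emph{if $\clu_S=\{0\}$, then the only compact $Y$ with $S^* Y S = Y$ is $Y=0$.} I would argue by contradiction. If $Y\neq 0$, compactness makes $\|Y\|^2$ an eigenvalue of $Y^* Y$, so the top singular subspace $E:=\ker(Y^* Y-\|Y\|^2 I)$ is a nonzero finite-dimensional space. For a unit vector $g\in E$ the chain
\[
\|Yg\| = \|S^* Y S g\| \le \|Y S g\| \le \|Y\|\,\|Sg\| \le \|Y\|\,\|g\| = \|Yg\|
\]
forces equality throughout, whence $\|Sg\|=\|g\|$ and $Sg$ is again a top singular vector; thus $S(E)\subseteq E$ and $S|_E$ is isometric. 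An isometry of a nonzero finite-dimensional space is unitary, so $S|_E$ has a unimodular eigenvalue with eigenvector in $E\subseteq\clh$, contradicting $\clu_S=\{0\}$.

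Next I would treat a general contraction $S$ by splitting off $\clu_S$. One first checks the elementary fact that a unimodular eigenvector $h$ of $S$ (with $Sh=\alpha h$, $|\alpha|=1$) satisfies $S^* h=\bar\alpha h$; consequently $\clu_S$ reduces $S$, and $S=S_0\oplus S'$ with $S_0=S|_{\clu_S}$ unitary and $S'=S|_{\clu_S^\perp}$ having no unimodular eigenvectors (any such would lie in $\clu_S\cap\clu_S^\perp$), i.e. $\clu_{S'}=\{0\}$. Writing $Y=\begin{pmatrix} A & B \\ C & D \end{pmatrix}$ with respect to $\clh=\clu_S\oplus\clu_S^\perp$, the identity $S^* Y S = Y$ decouples into $S_0^* A S_0=A$, $S_0^* B S'=B$, $S'^* C S_0=C$, and $S'^* D S'=D$. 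Using $S_0 S_0^*=I$, I would derive $B^* B=S'^*(B^* B)S'$, $C C^*=S'^*(C C^*)S'$ and $S'^* D S'=D$; each of $B^* B$, $C C^*$, $D$ is compact and $S'$-Toeplitz on $\clu_S^\perp$, so the key lemma forces $B=C=D=0$. Thus $Y=A\oplus 0$ with $A$ commuting with the unitary $S_0$ (hence with $S_0^*$). This gives $\text{Ran}\, Y,\ \text{Ran}\, Y^*\subseteq\clu_S$, which is precisely ``$\clu_S$ reduces $Y$ and $Y|_{\clu_S^\perp}=0$'', and a direct block computation yields $YS=SY$, $YS^*=S^* Y$, together with the same for $Y^*$. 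Applying this with $S=T_i$ and with $S=P$ (legitimate since $P^* X P=X$) delivers both (a) and (b).

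The main obstacle, and really the only nonroutine point, is the key lemma: extracting a genuine unimodular eigenvector of $S$ from the compactness of $Y$; the remainder is the block bookkeeping above. I would take particular care to justify that equality in the norm chain forces $Sg\in E$ (not merely $\|Sg\|=\|g\|$), and the passage \textbf{finite-dimensional isometry $\Rightarrow$ unitary $\Rightarrow$ unimodular eigenvalue}, which is exactly where the hypothesis that $X$ is compact enters in an essential way.
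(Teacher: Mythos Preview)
Your proof is correct and follows essentially the same strategy as the paper: reduce to the one-variable assertion by noting that $X\in\T(T_i)$ for each $i$ and $X\in\T(P)$, and then invoke the single-contraction result. The only difference is that the paper simply cites Douglas' theorem \cite[Theorem~8]{DG} for that one-variable step, whereas you supply a self-contained proof of it via the top singular subspace and block decomposition; your argument is in fact essentially Douglas' original one.
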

\begin{proof} 
Let $X\in\T(T)$ be a compact operator. Then $X\in \T(P)$ and also $X\in \T(T_i)$ for all $i=1,\dots,n$. The proof now follows by applying \cite[Theorem 8]{DG} for the compact operator $X$ viewed as $P$-Toeplitz operator as well as $T_i$-Toeplitz operator for all $i=1,\dots,n$.  
\end{proof}


The following corollary is an easy consequence of the above proposition. 
\begin{Corollary}\label{coro1}
Let $T=(T_1, \ldots,T_n)\in \clb(\clh)^n$  be an $n$-tuple of commuting contractions with $P=T_1\cdots T_n$, and let $X\in\T(T)$ be a compact operator. If one of $\clu_{T_i}=\{0\}$ ($i=1,\ldots,n$) or $\clu _P=\{0\}$, then $X=0$.
\end{Corollary}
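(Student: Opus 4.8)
The plan is to deduce Corollary \ref{coro1} directly from Proposition \ref{Pro_Compt}, which does all the heavy lifting. The corollary is simply the degenerate instance of part (b) of the proposition when one of the relevant eigenspaces is trivial, so the argument should be short.

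First I would invoke Proposition \ref{Pro_Compt}(b), which asserts that each $\clu_{T_i}$ and $\clu_P$ is a reducing subspace for the compact operator $X\in\T(T)$, and moreover that $X$ annihilates the orthogonal complements: $X|_{\clu_{T_i}^\perp}=0$ for each $i$ and $X|_{\clu_P^\perp}=0$. The key observation is that if any one of these subspaces is the zero subspace, then its orthogonal complement is the whole space $\clh$. Concretely, suppose $\clu_P=\{0\}$. Then $\clu_P^\perp=\clh$, and since $X|_{\clu_P^\perp}=0$ we conclude $X=0$ on all of $\clh$. The same reasoning applies verbatim if instead $\clu_{T_i}=\{0\}$ for some fixed $i\in\{1,\ldots,n\}$: then $\clu_{T_i}^\perp=\clh$ and $X|_{\clu_{T_i}^\perp}=0$ forces $X=0$.

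The only point requiring any care is the logical structure of the hypothesis, which is a disjunction (``one of'' the subspaces vanishes), so I would phrase the proof to cover both cases uniformly by noting that in either situation the complement on which $X$ is known to vanish coincides with the entire Hilbert space $\clh$. Thus the restriction statement from Proposition \ref{Pro_Compt}(b) upgrades from ``$X$ vanishes on a subspace'' to ``$X$ vanishes everywhere.''

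I do not anticipate any genuine obstacle here; the corollary is essentially a restatement of the strongest conclusion of the preceding proposition under a triviality assumption. If there were a subtlety, it would lie in Proposition \ref{Pro_Compt} itself, namely in establishing that $X|_{\clu_P^\perp}=0$ via the cited one-variable result \cite[Theorem 8]{DG}; but since that proposition is already available to us, the corollary follows immediately.
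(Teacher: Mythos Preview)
Your proposal is correct and matches the paper's approach: the paper simply states that the corollary is ``an easy consequence of the above proposition,'' and your argument spells out exactly that consequence by specializing Proposition~\ref{Pro_Compt}(b) to the case where one of the subspaces is $\{0\}$.
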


We end the section with the couple of immediate consequences of the above proposition and the corollary. These might be known to the experts in the area. However, due to lack of references, we include them as corollaries. Recall the Brown-Halmos type characterization of Toeplitz operators on the Hardy space over the polydisc $H^2(\D^n)$ which says that an operator $X\in\clb(H^2(\D^n))$ is a Toeplitz operator if 
\[
M^*_{z_i} X M_{z_i} =X, \quad (i=1,\ldots,n)
\]
where $(M_{z_1}, \ldots, M_{z_n})$ is the $d$-tuple of multi-shifts on $H^2(\D^n)$.
\begin{Corollary}
 A compact Toeplitz operator on the Hardy space over the polydisc is zero. 
\end{Corollary}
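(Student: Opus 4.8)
The plan is to recognize a compact Toeplitz operator on $H^2(\D^n)$ as a compact $T$-Toeplitz operator for the specific tuple $T=(M_{z_1},\dots,M_{z_n})$ of coordinate shifts, and then to invoke Corollary~\ref{coro1}. By the Brown--Halmos type characterization quoted just above the statement, an operator $X\in\clb(H^2(\D^n))$ is a Toeplitz operator precisely when $M_{z_i}^* X M_{z_i}=X$ for all $i=1,\dots,n$, which is exactly the condition $X\in\T(T)$. Thus a compact Toeplitz operator is a compact element of $\T(T)$, and Corollary~\ref{coro1} will force $X=0$ as soon as I verify that one of the subspaces $\clu_{T_i}$ (or $\clu_P$) is trivial.

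The key step is therefore to compute $\clu_{M_{z_i}}$ and show it is $\{0\}$. Fix $i$ and suppose $f\in H^2(\D^n)$ satisfies $M_{z_i}f=\alpha f$ with $|\alpha|=1$; equivalently, $(z_i-\alpha)f(z)=0$ for every $z\in\D^n$. Since $|\alpha|=1$, the holomorphic factor $z_i-\alpha$ vanishes nowhere on the open polydisc $\D^n$, so $f\equiv 0$. Hence $M_{z_i}$ has no unimodular eigenvector and $\clu_{M_{z_i}}=\{0\}$. The same reasoning gives $\clu_P=\{0\}$ for $P=M_{z_1}\cdots M_{z_n}=M_{z_1\cdots z_n}$, because $|z_1\cdots z_n|<1$ throughout $\D^n$, so either hypothesis of the corollary is in fact available.

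With $\clu_{T_i}=\{0\}$ in hand, Corollary~\ref{coro1} applies directly and yields $X=0$, completing the argument. I expect no genuine obstacle here: the only content beyond citing Corollary~\ref{coro1} is the elementary observation that coordinate multiplication on a space of holomorphic functions on $\D^n$ cannot have a unimodular eigenvalue, which is immediate from the nonvanishing of the holomorphic factor $z_i-\alpha$ on $\D^n$ when $|\alpha|=1$.
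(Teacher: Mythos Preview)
Your proof is correct and follows exactly the approach the paper intends: the corollary is stated there as an immediate consequence of Corollary~\ref{coro1}, and you have simply supplied the missing verification that $\clu_{M_{z_i}}=\{0\}$ (and $\clu_P=\{0\}$) for the tuple of coordinate shifts on $H^2(\D^n)$.
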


\begin{Corollary}
Let $T=(T_1, \ldots,T_n)\in \clb(\clh)^n$  be an $n$-tuple of commuting contractions with $P=T_1\cdots T_n$. Let the positive operator $Q_T$ be as in \eqref{Q_T}. If $Q_T$ is a compact operator, then $Q_T$ is a finite dimensional orthogonal projection and $T_i|_{\clq_T}$ ($i=1\ldots,n$) and $P|_{\clq_T}$ are unitary, where $\clq_T=\text{Ran}\, Q_T$.
\end{Corollary}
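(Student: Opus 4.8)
The plan is to analyze the spectral structure of the compact positive operator $Q_T^2$, exploiting the fact that $Q_T^2$ itself (unlike the individual powers $P^{*k}P^k$) commutes with $P$ --- this is exactly where compactness is used. First I would observe that $Q_T^2$ satisfies $P^*Q_T^2P=Q_T^2$, as recorded just after \eqref{Q_T}, so $Q_T^2$ is a $P$-Toeplitz operator, i.e. $Q_T^2\in\T(P)$. Since $Q_T$ is compact, so is $Q_T^2$, and Proposition~\ref{Pro_Compt} applied to the single contraction $P$ yields that $Q_T^2$ commutes with both $P$ and $P^*$. Consequently each eigenspace $E_\mu=\ker(Q_T^2-\mu I)$ (for $\mu>0$) is $P$-invariant, and these eigenspaces are finite-dimensional because $Q_T^2$ is compact and self-adjoint. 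Note also that $0\le Q_T^2\le I$, since each $P^{*k}P^k\le I$, so every eigenvalue lies in $(0,1]$.

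The key computation is to show that the only nonzero eigenvalue is $1$. Fix $\mu>0$ and a unit vector $h\in E_\mu$. Since $E_\mu$ is $P$-invariant, $Q_T^2(Ph)=\mu Ph$, and feeding this into $P^*Q_T^2P=Q_T^2$ applied to $h$ gives $\mu P^*Ph=\mu h$, hence $P^*Ph=h$ and $\|Ph\|=\|h\|$; iterating (each $P^kh$ again lies in $E_\mu$) yields $\|P^kh\|=\|h\|$ for all $k$. Therefore $\mu=\langle Q_T^2h,h\rangle=\lim_k\|P^kh\|^2=1$. Thus $Q_T^2$ is a self-adjoint operator with spectrum contained in $\{0,1\}$, i.e. an orthogonal projection, whose range is $E_1=\clq_T$; compactness forces this range to be finite-dimensional. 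Because $Q_T\ge0$ and $Q_T^2$ is a projection, $Q_T=Q_T^2$, so $Q_T$ is a finite-dimensional orthogonal projection and $\clq_T=\text{Ran}\,Q_T$ is closed. Since $P(\clq_T)\subseteq\clq_T$ and $P$ acts isometrically on $\clq_T$, finite-dimensionality makes $P|_{\clq_T}$ unitary.

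It remains to treat the $T_i$, and this is where the only genuinely delicate point lies. I would invoke the isometries $V_i\in\clb(\clq_T)$ built in the proof of Theorem~\ref{non-zero Toeplitz}, characterized by $V_iQ_Th=Q_TT_ih$. As $Q_T$ is now the orthogonal projection $P_{\clq_T}$, for $g\in\clq_T$ this reads $V_ig=P_{\clq_T}T_ig$, so $V_i$ is the compression of $T_i$ to $\clq_T$; being an isometry on the finite-dimensional space $\clq_T$, it is unitary. The main obstacle is to upgrade this compression statement to genuine invariance: from $\|g\|=\|V_ig\|=\|P_{\clq_T}T_ig\|\le\|T_ig\|\le\|g\|$ for $g\in\clq_T$, every inequality must be an equality, which forces $T_ig\in\clq_T$ and $\|T_ig\|=\|g\|$. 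Hence $\clq_T$ is $T_i$-invariant, $T_i|_{\clq_T}=V_i$ is unitary, and consistently $P|_{\clq_T}=T_1|_{\clq_T}\cdots T_n|_{\clq_T}$ is unitary as a product of unitaries, completing the proof.
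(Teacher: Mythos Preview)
Your proof is correct, but it takes a somewhat different route from the paper's. The paper first observes, via the sandwich inequality
\[
P^{*(k+1)}P^{k+1}\le P^{*k}T_i^*T_iP^k\le P^{*k}P^k,
\]
that in fact $T_i^*Q_T^2T_i=Q_T^2$ for every $i$, so $Q_T^2\in\T(T)$ for the full tuple. Proposition~\ref{Pro_Compt} then gives at once that $Q_T^2$ (and hence $Q_T$) commutes with each $T_i$ and $T_i^*$; from $T_i^*T_iQ_T=Q_T$ and $PQ_T=Q_TP$ one gets $P^{*k}P^kQ_Th=Q_Th$ for all $k$, whence $Q_T^2=Q_T$. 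The invariance of $\clq_T$ under each $T_i$ is then immediate, since $\clq_T$ is actually reducing.

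You instead use only $Q_T^2\in\T(P)$, extract the projection property by a direct spectral argument (showing the sole nonzero eigenvalue is $1$), and then recover $T_i$-invariance a posteriori via the isometries $V_i$ from the proof of Theorem~\ref{non-zero Toeplitz} together with the norm-squeeze $\|P_{\clq_T}T_ig\|=\|T_ig\|=\|g\|$. This is perfectly valid; the trade-off is that you avoid the initial squeeze inequality but pay for it with a slightly more indirect treatment of the individual $T_i$, whereas the paper's approach makes $\clq_T$ reducing for each $T_i$ fall out for free.
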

\begin{proof}
From the definition of $Q_T$, we have $P^* Q_T^2 P=Q_T^2$. Also, it follows from the inequalities 
\[
P^{* (k+1)}P^{k+1}\le P^{* k} T_i^* T_iP^k\le P^{* k}P^k \quad (1\le i\le n)
\]
that $T^*_i Q_T^2 T_i=Q_T^2$ for all $i=1, \ldots, n$. Then by part (a) of Proposition \ref{Pro_Compt}, we get $T_i Q_T^2=Q_T^2T_i$ ($i=1, \ldots, n$) and $P Q_T^2=Q_T^2 P$. Therefore, using functional calculus, we have $T^*_i T_i Q_T=Q_T$ and $P^* P Q_T=Q_T$. This in particular shows that $P^*Q_TP=Q_T$ and for all $h\in\clh$,
\[
\lim_{k\to \infty} \langle P^{* k} P^k Q_T h,  h \rangle =\langle P^{* k}Q_TP^k h, h\rangle =
\langle Q_T h, h \rangle.
\]Thus, $Q_T^2=Q_T$, and hence, the compactness of $Q_T$ completes the proof.
\end{proof}

\newsection{Lower and upper $T$-Toeplitz operators}\label{se3}

For an $n$-tuple of commuting contractions $T=(T_1, \ldots,T_n)\in \clb(\clh)^n$, we deal with the following systems of operator inequalities: 
\begin{align*}
    T^*_i X T_i \geq X \text{ and }
    T^*_i X T_i \leq X \quad (i=1,\ldots,n), 
\end{align*}
where $X\in\clb(\clh)$ is a self-adjoint operator.
Recall that a self-adjoint operator $X\in \clb(\clh)$ is an upper $T$-Toeplitz operator if $X$ satisfies the system of inequalities of first kind as above and is a lower $T$-Toeplitz operator if it satisfies the system of inequalities of second kind. 
First, we consider the cone of positive upper $T$-Toeplitz operators and we set 
$$\clu_+(T)=\{Q\in \clb(\clh): Q\geq 0, T_i^* Q T_i \geq Q, i=1, \ldots, n\}.$$
 We find a necessary and sufficient condition for the existence of a non-trivial element in  $\clu_+(T)$ next.
\begin{Proposition}
Let $T=(T_1, \ldots,T_n)\in \clb(\clh)^n$  be an $n$-tuple of commuting contractions with $P=T_1\cdots T_n$. Then $\clu_+(T) = \{0\}$ if and only if the adjoint of $P$ is pure.
\end{Proposition}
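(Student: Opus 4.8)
The plan is to prove the two implications separately, with essentially all of the content in the forward direction.

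First I would show that purity of $P^*$ forces $\clu_+(T)=\{0\}$. Take any $Q\in\clu_+(T)$, so $Q\geq0$ and $T_i^*QT_i\geq Q$ for each $i$. The key reduction is to pass from the $n$ individual inequalities to a single inequality for the product $P=T_1\cdots T_n$, namely $P^*QP\geq Q$. I would establish this by induction on the partial products $P_j=T_1\cdots T_j$: assuming $P_j^*QP_j\geq Q$, conjugating by $T_{j+1}$ preserves the order and gives $T_{j+1}^*P_j^*QP_jT_{j+1}\geq T_{j+1}^*QT_{j+1}\geq Q$; since the tuple commutes, $P_{j+1}=P_jT_{j+1}$, so the left-hand side is exactly $P_{j+1}^*QP_{j+1}$. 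Once $P^*QP\geq Q$ is in hand, applying this inequality to the vector $P^{k-1}h$ shows that $k\mapsto\langle QP^kh,P^kh\rangle$ is non-decreasing, whence $\langle Qh,h\rangle\leq\langle QP^kh,P^kh\rangle\leq\|Q\|\,\|P^kh\|^2$ for every $k$ and every $h\in\clh$. Purity of $P^*$ means $P^kh\to0$, so the right-hand side tends to $0$; hence $\langle Qh,h\rangle\leq0$, and positivity forces $Q=0$.

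For the converse I would argue by contraposition: if $P^*$ is not pure, I need to exhibit a non-zero element of $\clu_+(T)$. This is already available from Theorem~\ref{non-zero Toeplitz} and Remark~\ref{Remark_positive}. Non-purity of $P^*$ is precisely condition $(2)$ there, which is equivalent to the existence of a canonical isometric pseudo-extension $(\clj_T,\clq_T,V)$ of $T$, and the associated operator $\clj_T^*\clj_T=Q_T^2=\text{SOT}-\lim_k P^{*k}P^k$ is non-zero. As shown in the proof of $(3)\implies(1)$, for any isometric pseudo-extension one has $T_i^*\clj_T^*\clj_T T_i=\clj_T^*\clj_T$, so $Q_T^2$ is in fact a genuine positive $T$-Toeplitz operator; in particular it satisfies $T_i^*Q_T^2T_i\geq Q_T^2$ and therefore lies in $\clu_+(T)$. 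Thus $\clu_+(T)\neq\{0\}$, completing the contrapositive.

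The only step needing any care is the passage $T_i^*QT_i\geq Q\ (\forall i)\Rightarrow P^*QP\geq Q$; everything else is either a telescoping estimate driven by purity or a direct appeal to the already-proved Theorem~\ref{non-zero Toeplitz}. I do not expect a genuine obstacle here, since the commutativity of $T$ is exactly what makes the inductive product estimate go through, and it is precisely this reduction to the single contraction $P$ that lets the purity hypothesis act.
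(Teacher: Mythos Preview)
Your proposal is correct and follows essentially the same route as the paper: both directions match, with the forward implication reducing the $n$ inequalities to $P^{*k}QP^k\geq Q$ and then letting purity kill the right-hand side, and the converse invoking Theorem~\ref{non-zero Toeplitz} and Remark~\ref{Remark_positive} by contraposition. The only cosmetic difference is that you spell out the inductive passage to the product and use the quadratic form estimate $\langle Qh,h\rangle\leq\|Q\|\,\|P^kh\|^2$, whereas the paper writes the same step via the norm bound $\|P^{*k}QP^kh\|\leq\|Q\|\,\|P^kh\|$.
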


\begin{proof}
Let the adjoint of $P$ is pure, and $X \in \clu_+(T)$. Then, for $i=1,\ldots,n$, $X \leq T^{*}_i X T_i$ and this implies that $X\leq P^{* k}X P^k$ for $k\in \Z_+$.
Now, for $h\in \clh$, 
\begin{align*}
     \|P^{* k} X P^k h\| &\leq \|P^{* k}\| \|X\| \|P^k h\|
    \leq \|X\| \|P^k h\| \to 0\quad \text{as }k\to \infty.
\end{align*}
Thus, $X \leq 0$, and therefore, $X=0$.
Hence, $\clu_+ (T) = \{0\}$. Conversely, let $\clu_+ (T) = \{0\}$. Suppose that the adjoint of $P$ is not pure. Then by Theorem~\ref{non-zero Toeplitz}, $\T(T)$ is non-trivial. In fact, by part(ii) of Remark~\ref{Remark_positive}, we have a positive non-zero element in $\T(T)$, namely $\clj_T^*\clj_T$ for some canonical pseudo-extension $(\clj_T,\clk,V)$ of $T$. In particular, $\clj_T^*\clj_T\in \clu_+(T)$ which contradicts the fact that $\clu_+ (T) = \{0\}$. This completes the proof.
\end{proof}
Unlike upper $T$-Toeplitz operators, the cone of positive lower $T$-Toeplitz operators is always non-trivial as the identity operator is a lower $T$-Toeplitz operator. We now prove Theorem~\ref{sb1} which establishes the relation between upper and lower $T$-Toeplitz operators.

\begin{proof}[Proof of Theorem \ref{sb1}]
We only prove the theorem for upper $T$-Toeplitz operator as the argument for the case of lower $T$-Toeplitz operator is similar.  Let us assume that $X$ is an upper $T$-Toeplitz operator. Then, for any  $\alpha=(\alpha_1,\ldots,\alpha_n) \in \mathbb{Z}^n_+$, $T^{ *\alpha} X T^\alpha \geq X$. 
Set $U_\alpha:=T^{ *\alpha} X T^\alpha$, for $\alpha\in \Z_+^n$. For $\beta \in \Z_+^n$ and $\beta \geq \alpha$, 
\[U_\beta-U_\alpha=T^{ *\beta} X T^\beta-T^{ *\alpha} X T^\alpha=T^{* \alpha}(T^{* (\beta-\alpha)} X T^{(\beta- \alpha)} - X)T^\alpha \geq 0 \]
Thus $\{U_\alpha\}_{\alpha \in \Z_+^n}$ is an increasing net of self-adjoint operators. Since 
$\|U_\alpha\|\leq \|X\|$ for all $\alpha \in \Z_+^n$, $\{U_\alpha\}_{\alpha \geq 0}$ converges in the strong operator topology to $U$, say. Clearly, $X \leq U_{\alpha}\leq U$ for all $\alpha \in \Z_+^n$. 
We set $N:= U-X$. Then $N$ is a positive operator.
Now, for $e_i=(0,\ldots,0,\underbrace{1}_{i\text{th place}},0,\ldots,0) \in \Z_+^n$, $T_i^* U_\alpha T_i= T^{* (\alpha + e_i)} X T^{(\alpha + e_i)}$.
Therefore, for all $i=1,\dots,n$, 
\[ T^*_i U T_i= U,\,\, \text{and}\,\, T_i^* N T_i = T_i^* U T_i -T_i^* X T_i
      =U- T_i^* X T_i
      \leq U -X=N.\]
Also, for $\alpha \in \Z_+^n$,
\begin{align*}
    T^{* \alpha} N T^\alpha &=T^{* \alpha} U T^\alpha -T^{* \alpha} X T^\alpha 
    = U -T^{* \alpha} X T^\alpha \to 0 \ \text{in SOT as } \alpha \to \infty.
\end{align*}
 Hence, we have a self-adjoint operator $U$ and a positive operator $N$ such that $X=U-N$ and for $i=1,\ldots,n$, $T_i^* U T_i=U$ and $T_i^*N T_i \leq N$  with $T^{* \alpha} N T^{\alpha} \to 0$ in SOT.
 Converse implication is easy to check.
 
 For the uniqueness of such a decomposition, we assume that $X$ has an another decomposition, that is, 
   $ X= U_1 - N_1$, such that for $i=1, \ldots,n$, $T^*_i U_1 T_i=U_1$ and $T^*_i N_1 T_i \leq N_1$ with $T^{* \alpha} N_1 T^\alpha \to 0$ in SOT as $\alpha\to \infty$. Since $U-U_1=N-N_1$, we have 
   \begin{align*}
       U-U_1=T^{* \alpha} (N-N_1) T^\alpha \to 0 \quad \text{in SOT as } \alpha\to \infty.
   \end{align*}
Thus, $U=U_1$, and therefore $N=N_1$ as asserted.
\end{proof}

By the above theorem it is important to study positive lower $T$-Toeplitz operator $N$ such that 
$T^{* \alpha} N T^{\alpha} \to 0$ in SOT. For such an operator $N$, $T^{* \alpha} N T^{\alpha} \to 0$ in SOT is equivalent to $P_T^{*k} N P_T^k \to 0$ in SOT as $k\to \infty$, where $P_T=T_1\cdots T_n$. Indeed, if $N$ is a positive lower $T$-Toeplitz operator then the equivalence  can be obtain from the following inequalities
\[
P_T^{* k}NP_T^k\le T^{* \alpha}N T^{\alpha}\le P_T^{* m}N P_T^{m},
\]
where for $\alpha\in \Z_+^n$, $m= \text{min}\{ \alpha_i: i=1,\dots,n\}$ and $k=\text{max}\{\alpha_i:i=1,\dots,n\}$.
The above observation leads us to make the following equivalent definition of the cone of pure positive lower $T$-Toeplitz operators
\[
\clp\cll_+ (T)=\{ N\in\clb(\clh): N\ge 0, T_i^*NT_i\le N, i=1,\dots,n, P_T^{*k}N P_T^k\to 0\ \text{in SOT as } k\to \infty\}.
\]
The rest of this section is devoted to characterize the set $\clp\cll_+(T)$, which is our main result in this section. To this end, we need to first introduce some terminologies.

For two Hilbert spaces $\cle_1$ and $\cle_2$, an operator-valued analytic function $\Theta: \D \to \clb(\cle_1, \cle_2)$ is said to be multiplier from 
$H^2_{\cle_1}(\D)$ to $H^2_{\cle_2}(\D)$ if $\Theta f \in H^2_{\cle_2}(\D)$ for $f \in H^2_{\cle_1}(\D)$.
Here, for a Hilbert space $\cle$,
\[H^2_{\cle}(\D)=\{f: \D \to \cle: f(z)=\sum_{k\in \Z_+} a_k z^k, a_k\in \cle, z \in \D, \sum_{k\in \Z_+} \|a_k\|^2 < \infty \}\]
is the $\cle$-valued Hardy space over $\D$. The multiplier $\Theta$ is said to be inner  multiplier if the associated multiplication operator $M_\Theta :H^2_{\cle_1}(\D) \to H^2_{\cle_2}(\D)$, defined by $M_\Theta (f)=\Theta f$ for $f\in H^2_{\cle_1}(\D)$, is a isometry. Let
\[W=\begin{bmatrix}
A & B\\
C & D
\end{bmatrix}:\cle_1 \oplus \cle_2\to \cle_1 \oplus \cle_2 \]
be a unitary. Then the $\clb(\cle_1)$-valued analytic function $\tau_W$ on $\D$, defined by 
\[\tau_W(z):=A+ zB(I-zD)^{-1} C \quad (z\in \D),\]
is called the transfer function corresponding to $W$. However, in this article, we will only deal with transfer functions corresponding to unitary operators of the form $W=\begin{bmatrix}
A & B\\
C & 0
\end{bmatrix}$, and in such cases $\tau_W$ is always a $\clb(\cle_1)$-valued inner function (\cite{NF}). In other words, $\tau_W$ is an inner multiplier on $H^2_{\cle_1}(\D)$. 

We now prove a crucial lemma which does the heavy lifting of our main result. Similar type of result is also proved in a slightly different setup in ~\cite{BSS, MB} and has its root in ~\cite{DS}. Since we prove the lemma in a more general setting, it may appear to be abstract to the reader. Roughly speaking, the lemma finds a sufficient condition on a unitary so that the transfer function of its adjoint gives a co-extension of certain operator.      
\begin{Lemma}\label{sb3}
Let $\sq,\sr, \s,\st\in \clb(\clh)$ be such that $\clr=\overline{\text{Ran}}\, \sr$, $\cls= \overline{\text{Ran}}\, \s$ and $\sq \st=\st \sq$, and let $\sv: \clr\to \cle$ be an isometry for some Hilbert space $\cle$. Suppose that $\pi: \clh \to H^2_{\clr}(\D)$, defined by $h\mapsto \sum_{k\ge 0}z^k \sr \sq^k h$, is a bounded operator and
\[\sw : \begin{bmatrix}
    \sa & \sanb \\
    \sanc & 0
  \end{bmatrix} : \cle \oplus(\tilde{\cle}\oplus \cls) \to  \cle \oplus (\tilde{\cle}\oplus\cls) \] is a unitary for some Hilbert space $\tilde{\cle}$ such that
  \[\sw(\sv \sr h,0_{\tilde{\cle}}, \s \sq h)= (\sv \sr \st h, 0_{\tilde{\cle}}, \s h) \quad (h \in \clh).\] 
Then the transfer function corresponding to the unitary $\sw^*$, $\Phi(z)= \sa^* + z \sanc^* \sanb^* \quad (z \in \D)$,
is an $\clb(\cle)$-valued inner function which satisfies 
  \[\Pi \st = M^* _{\Phi} \Pi,\]
  where $\Pi=(I_{H^2(\D)}\otimes \sv)\pi$. 
\end{Lemma}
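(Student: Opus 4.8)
The plan is to verify the two assertions separately: that $\Phi$ is inner, and that $\Pi\st = M^*_\Phi\Pi$. The first is immediate. Since $\sw$ is unitary, so is $\sw^*$, and writing it out in block form gives $\sw^*=\begin{bmatrix}\sa^* & \sanc^*\\ \sanb^* & 0\end{bmatrix}$, which is again of the special shape $\begin{bmatrix}A & B\\ C & 0\end{bmatrix}$ with vanishing lower-right corner. Its transfer function is exactly $\tau_{\sw^*}(z)=\sa^*+z\sanc^*(I-z\cdot 0)^{-1}\sanb^*=\sa^*+z\sanc^*\sanb^*=\Phi(z)$, so by the fact recalled just before the lemma (\cite{NF}), $\Phi$ is a $\clb(\cle)$-valued inner function.

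For the intertwining relation I would argue at the level of Fourier coefficients. First note that $\Pi=(I_{H^2(\D)}\otimes \sv)\pi$ is bounded, since $\pi$ is bounded by hypothesis and $\sv$ is an isometry, and that it acts by $\Pi h=\sum_{k\ge 0}z^k\,\sv\sr\sq^k h$. A direct computation of the adjoint of multiplication by the degree-one function $\Phi(z)=\sa^*+z\sanc^*\sanb^*$ shows that, for $g=\sum_k z^k b_k\in H^2_{\cle}(\D)$,
\[
M^*_\Phi g=\sum_{k\ge 0}z^k\big(\sa\, b_k+\sanb\sanc\, b_{k+1}\big).
\]
Applying this to $g=\Pi h$, the coefficient of $z^k$ in $M^*_\Phi\Pi h$ is $\sa\sv\sr\sq^k h+\sanb\sanc\sv\sr\sq^{k+1}h$, while the coefficient of $z^k$ in $\Pi\st h$ is $\sv\sr\sq^k\st h=\sv\sr\st\sq^k h$, using $\sq\st=\st\sq$. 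Thus it suffices to establish, for every $h\in\clh$ and every $k\ge 0$,
\[
\sa\sv\sr\sq^k h+\sanb\sanc\sv\sr\sq^{k+1}h=\sv\sr\st\sq^k h.
\]

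This is where the hypothesis on $\sw$ enters, and it is the only substantive step. Expanding the defining relation $\sw(\sv\sr h,0_{\tilde{\cle}},\s\sq h)=(\sv\sr\st h,0_{\tilde{\cle}},\s h)$ via the block form of $\sw$ yields the two component identities $\sa\sv\sr h+\sanb(0_{\tilde{\cle}},\s\sq h)=\sv\sr\st h$ and $\sanc\sv\sr h=(0_{\tilde{\cle}},\s h)$, both valid for all $h$. Replacing $h$ by $\sq^{k+1}h$ in the second gives $\sanc\sv\sr\sq^{k+1}h=(0_{\tilde{\cle}},\s\sq^{k+1}h)$, and replacing $h$ by $\sq^k h$ in the first gives $\sa\sv\sr\sq^k h+\sanb(0_{\tilde{\cle}},\s\sq^{k+1}h)=\sv\sr\st\sq^k h$; substituting the former into the latter produces precisely the required coefficientwise identity. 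Comparing Fourier coefficients then yields $\Pi\st=M^*_\Phi\Pi$ on all of $\clh$.

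I expect the main obstacle to be purely organizational rather than conceptual: keeping straight the decomposition $\cle\oplus(\tilde{\cle}\oplus\cls)$ and the two indices, so that the component identities read at the shifted arguments $\sq^k h$ and $\sq^{k+1}h$ line up exactly with the $b_k$ and $b_{k+1}$ appearing in the formula for $M^*_\Phi$. The commutativity $\sq\st=\st\sq$ is essential precisely for matching $\sv\sr\sq^k\st h$ with $\sv\sr\st\sq^k h$, and the isometric property of $\sv$ (together with boundedness of $\pi$) is what legitimises the coefficientwise comparison.
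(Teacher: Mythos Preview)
Your proof is correct and follows essentially the same approach as the paper: both extract the two component identities $\sa\sv\sr h+\sanb(0_{\tilde{\cle}},\s\sq h)=\sv\sr\st h$ and $\sanc\sv\sr h=(0_{\tilde{\cle}},\s h)$ from the block action of $\sw$, combine them into $\sv\sr\st=\sa\sv\sr+\sanb\sanc\sv\sr\sq$, and then verify $\Pi\st=M^*_\Phi\Pi$ by comparing Fourier coefficients (the paper phrases this as pairing against $z^l\eta$, which is the same computation).
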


\begin{proof}
Since 
\[\begin{bmatrix}
    \sa & \sanb\\
    \sanc & 0
  \end{bmatrix} \begin{bmatrix} \sv \sr h \\ (0_{\tilde{\cle}}, \s \sq h ) \end{bmatrix} =\begin{bmatrix} \sv \sr \st h \\ (0_{\tilde{\cle}}, \s h) \end{bmatrix},\]
 we have 
  \[\sv \sr \st h = \sa \sv \sr h + \sanb (0_{\tilde{\cle}},\s \sq h) \quad \text{and} \quad (0_{\tilde{\cle}},  \s h) = \sanc \sv \sr h.\] 
 Consequently, \[\sv \sr \st = \sa \sv \sr + \sanb \sanc \sv \sr \sq.\]
Now, for $h\in \clh$, $\eta \in \cle$,
\begin{align*}
\langle { M^*_{\Phi} \Pi h, z^l \eta} \rangle =& \langle {\Pi h,  M_{\Phi} (z^l \eta)} \rangle\\
=& \langle {(I_{H^2(\mathbb{D})} \otimes \sv) \sum_{l\in \Z_+}(\sr \sq^l h) z^l, (\sa^* + z \sanc^* \sanb^*)(z^l \eta)}\rangle\\
=& \langle {\sv \sr \sq^l h, \sa^* \eta} \rangle + \langle{\sv \sr \sq^{l+1}h, \sanc^* \sanb^* \eta}\rangle\\
=&  \langle{(\sa \sv \sr \sq^l + \sanb \sanc \sv  \sq^{l+1})h, \eta} \rangle\\
=& \langle{\sv \sr \st \sq^l h, \eta} \rangle,
\end{align*}
 and  
  \begin{align*}
      \langle{\Pi \st h, z^l \eta} \rangle =& \langle{(I_{H^2(\mathbb{D})} \otimes \sv) \sum_{l\in \Z_+}(\sr \sq^l \st h)z^l, z^l \eta} \rangle\\
      =& \langle{\sv \sr \sq^l \st h , \eta} \rangle\\
      =& \langle{\sv \sr \st \sq^l h, \eta}\rangle.
     \end{align*}
     Hence, $\Pi \st = M^* _{\Phi} \Pi$ as asserted.
\end{proof}

We are now ready to prove Theorem ~\ref{2_M_theta}. For the convenience of the reader, we again state the theorem here.  
\begin{Theorem}\label{Re: 2_M_theta}
Let $T=(T_1,T_2)\in \clb(\clh)^2$  be a pair of commuting contractions. Then $N \in \clp\cll_+(T)$ if and only if there exist a Hilbert space $\cle$, a bounded operator $\Pi:\clh \to H^2_{ \cle}(\mathbb{D})$, and a pair of commuting isometries $M=(M_{\Phi_1}, M_{\Phi_2})\in \clb(H^2_{ \cle}(\mathbb{D}))^2$  such that 
\[N=\Pi^* \Pi,
\quad \text{and}\quad \Pi T_i= M^*_{\Phi_i}\Pi \quad(i=1,2),\]
where
$\Phi_1(z)=(P+ z P^\perp) U^*$ and $\Phi_2(z)=U(P^\perp+ z P)$ are so that $\Phi_1(z) \Phi_2(z)=\Phi_2(z)\Phi_1(z)=z$ ($z \in \D$) for some unitary $U\in\clb(\cle)$ and projection $P\in\clb(\cle)$.

In particular, if $N\in \clp\cll_+(T)$ is a contraction, then $(\Pi, H^2_{ \cle}(\mathbb{D}), (M^*_{\Phi_1}, M^*_{\Phi_2}))$ is a co-isometric pseudo-extension of $T$.
\end{Theorem}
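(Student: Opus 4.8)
The plan is to prove the two directions separately, with the forward direction (a positive pure lower $T$-Toeplitz operator factors through a BCL pair) being where all the work lies, and the converse being a direct computation. For the forward direction, the natural starting point is to mimic the construction in the proof of Theorem~\ref{non-zero Toeplitz}, but now adapted to the pure lower case. Given $N\in\clp\cll_+(T)$, I would first build the ``defect'' machinery that captures the failure of $N$ to be $T$-Toeplitz. Since $N\ge 0$ and $T_i^*NT_i\le N$, each difference $N-T_i^*NT_i$ is positive; writing $N^{1/2}$ and examining $\|N^{1/2}T_ih\|^2\le\|N^{1/2}h\|^2$, I can introduce contractions on $\overline{\mathrm{Ran}}\,N^{1/2}$ via Douglas factorization, exactly as in the $T$-Toeplitz case, but here they will only be contractions, not isometries. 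The purity condition $P_T^{*k}NP_T^k\to 0$ in SOT is what will let me build an honest embedding $\pi:\clh\to H^2_{\clr}(\D)$ of the form $h\mapsto\sum_{k\ge0}z^k\sr\sq^kh$; the telescoping identity $\sum_{k\ge0}\|\sr\sq^kh\|^2=\|N^{1/2}h\|^2$ (coming from $N-P_T^*NP_T=\sr^*\sr$ and the vanishing of the tail) is precisely what makes $\pi$ bounded and gives $\pi^*\pi$ equal to $N$ after composing with the isometry $\sv$.

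The heart of the argument is then to invoke Lemma~\ref{sb3}. The plan is to choose the operators $\sq,\sr,\s,\st$ so that the lemma's hypotheses are met and its conclusion yields exactly $\Pi T_i=M^*_{\Phi_i}\Pi$. Here $\st$ will play the role of $T_i$ (or rather the contraction induced by $T_i$ on the relevant space), $\sq$ will be the contraction induced by the product $P_T=T_1T_2$, and $\sr,\s$ are the defect operators recording $N-P_T^*NP_T$ and the single-factor defects. The key structural input special to $n=2$ is that the product factors as $P_T=T_1T_2=T_2T_1$, so the two single-variable defect spaces fit together into one unitary colligation $\sw$ of the block form required by the lemma. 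I would construct the unitary $W=\bigl[\begin{smallmatrix}A&B\\C&0\end{smallmatrix}\bigr]$ explicitly so that its transfer function produces $\Phi_1(z)=(P+zP^\perp)U^*$, and then observe that the commuting isometry condition $\Phi_1\Phi_2=\Phi_2\Phi_1=z$ forces the companion $\Phi_2(z)=U(P^\perp+zP)$; this is the BCL normalization, and verifying $\Phi_1(z)\Phi_2(z)=z$ is a short algebraic check using $PP^\perp=0$ and $U^*U=I$.

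The converse is straightforward and I would dispatch it quickly: if $N=\Pi^*\Pi$ with $\Pi T_i=M_{\Phi_i}^*\Pi$ and $(M_{\Phi_1},M_{\Phi_2})$ commuting isometries whose product $M_{\Phi_1}M_{\Phi_2}=M_z$ is the pure shift, then $T_i^*NT_i=\Pi^*M_{\Phi_i}M_{\Phi_i}^*\Pi\le\Pi^*\Pi=N$ since $M_{\Phi_i}M_{\Phi_i}^*\le I$, giving the lower $T$-Toeplitz inequality, and purity of $N$ follows from $P_T^{*k}NP_T^k=\Pi^*M_z^{k}M_z^{*k}\Pi\to 0$ in SOT because $M_z^{*k}\to0$ in SOT on $H^2_{\cle}(\D)$. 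The final ``In particular'' clause is immediate: if $N$ is a contraction then $\Pi$ is a contraction, and the relations $\Pi T_i=M_{\Phi_i}^*\Pi$ say precisely that $(\Pi,H^2_{\cle}(\D),(M_{\Phi_1}^*,M_{\Phi_2}^*))$ is a co-isometric pseudo-extension in the sense of Definition~\ref{pseudo_D1}.

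The main obstacle I anticipate is matching the abstract colligation in Lemma~\ref{sb3} to a single unitary $\sw$ that simultaneously produces both $\Phi_1$ and $\Phi_2$ with the correct BCL form and guarantees commutativity $M_{\Phi_1}M_{\Phi_2}=M_{\Phi_2}M_{\Phi_1}$. In the $n=2$ case the two defect data are linked by the symmetry $T_1T_2=T_2T_1$, so one expects the relevant wandering/defect spaces to have a balanced ``$P$ versus $P^\perp$'' splitting that yields the projection $P$ and unitary $U$; I would need to identify $\cle$, $P$, and $U$ explicitly from the colligation and then verify commutativity by a transfer-function computation. This balancing is exactly what fails for $n>2$ and forces the noncommuting statement of Theorem~\ref{M_theta}, so I would expect to lean on the $BSS$ construction referenced in the text to carry out this step cleanly.
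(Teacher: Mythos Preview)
Your proposal is correct and follows essentially the same approach as the paper: build the defect operators $R^2=N-P_T^*NP_T$ and $R_i^2=N-T_i^*NT_i$, use purity to get a bounded $\pi:\clh\to H^2_{\clr}(\D)$ with $\pi^*\pi=N$, extract a unitary $U$ and projection $P$ from the two-variable defect identity $R^2=R_1^2+T_1^*R_2^2T_1=T_2^*R_1^2T_2+R_2^2$, and then invoke Lemma~\ref{sb3}; the converse and the ``in particular'' clause are handled exactly as you describe.

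One small clarification on the point you flag as the main obstacle: the paper does not feed a \emph{single} colligation $\sw$ into Lemma~\ref{sb3} to produce both $\Phi_1$ and $\Phi_2$ at once. Instead it builds two separate unitaries $W_1$ and $W_2$ (one for each coordinate) out of the common ingredients $U$ and $P$, and applies Lemma~\ref{sb3} twice. The shared $(U,P)$ data is precisely what makes the resulting $\Phi_1(z)=(P+zP^\perp)U^*$ and $\Phi_2(z)=U(P^\perp+zP)$ satisfy $\Phi_1\Phi_2=\Phi_2\Phi_1=z$, so commutativity comes out of the algebra rather than from a single colligation. This is a minor point of execution, not a gap in your plan.
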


\begin{proof}
Let $N\in \clp\cll_+(T)$. Set $P_T:=T_1T_2$. Since $P_T^*NP_T\le N$, we set
$R^2:=N-P_T^*NP_T$ and $\clr=\overline{\text{Ran}}\, R$. Also, since $T_i^*NT_i\le N$ for $i=1,2$, we set $R_i^2:= N-T_i^*NT_i$ and $\clr_i:=\overline{\text{Ran}}\, R_i$ for $i=1,2$. Consider the map $\pi:\clh\to H^2_{\clr}(\D)$, defined by $h\mapsto \sum_{k\in \Z_+}z^k RP_T^k h$. A simple calculation using $P_T^{* k}NP_T^k\to 0$ in SOT reveals that 
\[
\sum_{k\in \Z_+} P_T^{* k}R^2P_T^k=\sum_{k\in \Z_+}P_T^{* k}(N-P_T^* N P_T)P_T^k =N,
\]
where the above sum converges in SOT. This in particular shows that $\pi$ is a well-defined bounded operator and  $\pi^*\pi=N$. We now proceed to find several unitaries which are needed to construct $\Phi_1$ and $\Phi_2$. First, using the operator identity 
\begin{equation}\label{operator identity}
R^2=R^2_1 + T^*_1 R^2_2 T_1=  T_2^*R_1^2T_2+ R_2^2,
\end{equation}
we get an isometry 
\begin{align*}
U: \{R_1 T_2 h \oplus R_2 h : h\in \clh\} \to \{R_1 h \oplus R_2 T_1 h : h \in \clh\},
\end{align*}
defined by 
\begin{align}\label{pair-U1}
    U(R_1  T_2 h, R_2 h)=( R_1 h, R_2 T_1 h) \quad (h \in \clh).
\end{align}
Then, by adding a Hilbert space $\cld$ if necessary, we extend the isometry $U$ to a unitary, again denoted by $U$, acting on  $(\cld \oplus \clr_1) \oplus \clr_2$ such that 
\[U((0_{\cld}, R_1  T_2 h), R_2 h)=((0_{\cld}, R_1 h), R_2 T_1 h) \quad ( h \in \clh).\]
We set $\cle:=\cld \oplus \clr_1 \oplus \clr_2$ and again using ~\eqref{operator identity}, we get an isometry $V:\clr \to \cle $, defined by 
\[V (Rh) =((0_{\cld}, R_1 h),  R_2 T_1 h) \quad (h\in \clh).\]
Finally, with the help of the embedding 
 $\iota_1: \cld \oplus \clr_1 \to \cle$, and  $\iota_2: \clr_2 \to \cle$, defined by 
 \[\iota_1 (d, h_1)=(d, h_1, 0) \quad \text{and}\quad  \iota_2  h_2=(0,0, h_2) \quad(d\in \cld, h_1\in \clr_1, h_2 \in \clr_2),\]
 and the orthogonal projection $P: \cle \to \cle$, defined by
 \[P(d, h_1, h_2)=(0, 0, h_2), \quad (d\in \cld, h_1\in \clr_1, h_2 \in \clr_2),\]
we define 
\[W_1: \cle \oplus (\cld \oplus \clr_1) \to \cle \oplus (\cld \oplus \clr_1),\]
defined by 
\[W_1=
\begin{bmatrix}
U & 0\\
0 & I
\end{bmatrix} 
\begin{bmatrix}
P & \iota_1\\
\iota^*_1 & 0
\end{bmatrix}.\]
Then $W_1$ is a unitary and by a straightforward computation, we have
\[ W_1 \begin{pmatrix}
    V R h\\
    (0_{\cld}, R_1 P_T h)
    \end{pmatrix}= \begin{pmatrix}
    V R T_1 h \\
    (0_{\cld}, R_1 h)
    \end{pmatrix} \quad (h\in\clh).\]
Thus, $W_1$ satisfies the hypothesis of Lemma~\ref{sb3}
with $\sq=P_T, \sr=R, \s= R_1, \st= T_1, \tilde{\cle}=\cld$,
and $\sv=V$, and therefore, 
\[
\Pi T_1 = M^* _{\Phi_1} \Pi,
\]
where $\Pi=(I_{H^2(\D)}\otimes V)\pi$ and $\Phi_1(z)=(P+ z P^\perp) U^*$ $(z\in\D)$ is the transfer function of $W_1^*$.
Similarly, if we consider
\[W_2: \cle \oplus \clr_2 \to \cle \oplus \clr_2,\]
defined by 
\[W_2=
\begin{bmatrix}
P^\perp & \iota_2\\
\iota^*_2 & 0
\end{bmatrix} 
\begin{bmatrix}
U^* & 0\\
0 & I
\end{bmatrix},\]
then $W_2$ is a unitary so that 
\[ W_2 \begin{pmatrix}
    V R h\\
     R_2 P_T h
    \end{pmatrix}= \begin{pmatrix}
    V R T_2 h \\
     R_2 h
    \end{pmatrix}\quad (h\in\clh).\] 
Then $W_2$ also satisfies the hypothesis of Lemma~\ref{sb3} with $\sq=P_T, \sr=R, \s= R_2, \st= T_2$, $\tilde{\cle}=0$
and $\sv=V$.
Therefore,  we have
\[\Pi T_2 = M^* _{\Phi_2} \Pi,\]
where $\Phi_2(z)=U(P^\perp+ z P)$ ($z \in \D$) is the transfer function corresponding to $W_2^*$. Now, it easy to check that $(M_{\Phi_1}, M_{\Phi_2})\in \clb(H^2_{ \cle}(\mathbb{D}))^2$ is a pair of commuting isometries,  $N=\Pi^* \Pi$, and $\Phi_1(z) \Phi_2(z)=\Phi_2(z)\Phi_1(z)=z$ ($z \in \D$). This proves the `only if' part of the theorem. 

On the other hand, since $M_{\Phi_1} M_{\Phi_2}=M_z$, then $\Pi P_T=M_z^*\Pi$, and therefore, 
\[P_T^{* k}N P_T^k= P_T^{* k}\Pi^*\Pi P_T^k=\Pi^* M_z^k M_z^{* k}\Pi\to 0\ \text{ in SOT as } k\to \infty.\] 
The converse part now follows. Finally, if $N\in \clp\cll_+(T)$ is a contraction, then $\pi$ is a contraction, and thus, $\Pi$ is a contraction. This completes the proof.
\end{proof}

Next we prove Theorem~\ref{M_theta}, and for the convenience of the reader we state the theorem again here. The following set of notations will be used in the proof.  For an $n$-tuple of commuting contractions $T=(T_1,\dots, T_n)$ on $\clh$ and $i=1,\ldots,n$, we denote by $\tilde{T_i}$ the $(n-1)$-tuple obtained from $T$ by deleting $T_i$. That is, 
\[
\tilde{T}_i=(T_1,\dots, T_{i-1}, T_{i+1},\dots, T_n).
\]
Also, we use the notation $P_{\tilde{T_i}}:= T_1\cdots T_{i-1}T_{i+1}\cdots T_n$.
\begin{Theorem}\label{Re: M_theta}
Let $T=(T_1, \ldots,T_n)\in \clb(\clh)^n$ be an $n$-tuple of commuting contractions with $n>2$. Then $N\in \clp \cll_+(T)$ if and only if there exist a Hilbert space $\cle$, a bounded operator $\Pi:\clh \to H^2_{ \cle}(\mathbb{D})$, and an $n$-tuple of isometries $M=(M_{\Theta_1}, \ldots, M_{\Theta_n})$ on $H^2_{ \cle}(\mathbb{D})$  such that 
\[N=\Pi^* \Pi\, \text{and } \Pi T_i= M^*_{\Theta_i}\Pi  
  \quad (i=1,\ldots,n),\]
where for each $i=1,\ldots,n$, 
$\Theta_i(z)=(P_i+zP_i^{\perp})U_i^*$ for some unitary $U_i\in \clb(\cle)$ and orthogonal projection $P_i\in\clb(\cle)$.
Moreover, 
\[
M^*_{z} \Pi=M^*_{\Theta_1}M^*_{\Theta_2} \dots M^*_{\Theta_n}\Pi.
\]

In particular, if $N\in \clp \cll_+(T)$ is a contraction, then $(\Pi, H^2_{ \cle}(\mathbb{D}), (M^*_{\Theta_1},\ldots, M^*_{\Theta_n}))$ is a non-commuting co-isometric pseudo-extension of $T$.
\end{Theorem}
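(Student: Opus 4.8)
The plan is to reproduce the strategy behind Theorem~\ref{Re: 2_M_theta}, applying Lemma~\ref{sb3} once for each index $i$ against a single common dilation map, and to pinpoint the failure of commutativity in the family of unitaries that feed the lemma. I set $R^2:=N-P_T^*NP_T$ and $R_i^2:=N-T_i^*NT_i$, both positive since $N\in\clp\cll_+(T)$, write $\clr=\overline{\text{Ran}}\,R$ and $\clr_i=\overline{\text{Ran}}\,R_i$, and consider $\pi:\clh\to H^2_{\clr}(\D)$, $h\mapsto\sum_{k}z^kRP_T^kh$. As in the two-variable proof, the purity $P_T^{*k}NP_T^k\to0$ yields $\sum_kP_T^{*k}R^2P_T^k=N$ in SOT, so $\pi$ is bounded with $\pi^*\pi=N$. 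I then fix one isometry $V:\clr\to\cle$ into a sufficiently large coefficient space and put $\Pi:=(I_{H^2(\D)}\otimes V)\pi$; this makes $\Pi^*\Pi=\pi^*\pi=N$ automatic, and directly from the definition of $\pi$ one reads off $\Pi P_T=M_z^*\Pi$, since $M_z^*$ merely deletes the zeroth Taylor coefficient.

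For each $i$ the algebraic engine is the operator identity
\[
R^2+P_T^*R_i^2P_T=T_i^*R^2T_i+R_i^2,
\]
valid because $T_i$ commutes with $P_T=T_1\cdots T_n$; in quadratic-form language it reads $\|VRh\|^2+\|R_iP_Th\|^2=\|VRT_ih\|^2+\|R_ih\|^2$. Combined with the two telescopings $R^2=R_i^2+T_i^*\tilde R_i^2T_i=\tilde R_i^2+P_{\tilde{T_i}}^*R_i^2P_{\tilde{T_i}}$, where $\tilde R_i^2:=N-P_{\tilde{T_i}}^*NP_{\tilde{T_i}}\ge0$, this lets me construct, as in the $n=2$ case, a zero-corner unitary $W_i=\left[\begin{smallmatrix}\mathsf A_i&\mathsf B_i\\ \mathsf C_i&0\end{smallmatrix}\right]$ on $\cle\oplus(\tilde\cle_i\oplus\clr_i)$ with $W_i(VRh,0,R_iP_Th)=(VRT_ih,0,R_ih)$, assembled from a standard rotation trading the $\clr_i$-slot in and out and an untwisting unitary $U_i$ on $\cle$ produced by the two telescopings. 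Lemma~\ref{sb3}, applied with $\sq=P_T$, $\sr=R$, $\s=R_i$, $\st=T_i$, $\sv=V$, then gives $\Pi T_i=M_{\Theta_i}^*\Pi$. A clean observation handles the shape of $\Theta_i$: for any unitary with vanishing lower-right corner one has, setting $U_i:=\mathsf A_i+\mathsf B_i\mathsf C_i$ (unitary) and $P_i:=\mathsf A_i^*\mathsf A_i$ (a projection, forced by the unitarity relations), that $\Theta_i(z)=\mathsf A_i^*+z\,\mathsf C_i^*\mathsf B_i^*=(P_i+zP_i^\perp)U_i^*$, exactly the asserted BCL form, with no further tailoring.

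The moreover-relation and the converse are then short. Composing the intertwiners gives $M_{\Theta_1}^*\cdots M_{\Theta_n}^*\Pi=\Pi T_1\cdots T_n=\Pi P_T=M_z^*\Pi$, which is the displayed identity. Conversely, if $\Pi$ and the $M_{\Theta_i}$ satisfy the stated relations, then $N:=\Pi^*\Pi\ge0$, and $T_i^*NT_i=\Pi^*M_{\Theta_i}M_{\Theta_i}^*\Pi\le\Pi^*\Pi=N$ because each $M_{\Theta_i}$ is an isometry; moreover $\Pi P_T=M_{\Theta_1}^*\cdots M_{\Theta_n}^*\Pi=M_z^*\Pi$ forces $P_T^{*k}NP_T^k=\Pi^*M_z^kM_z^{*k}\Pi\to0$ in SOT, since $M_z$ is a pure shift, so $N\in\clp\cll_+(T)$. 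If $N$ is a contraction then $\pi$, hence $\Pi$, is a contraction, and $(\Pi,H^2_{\cle}(\D),(M_{\Theta_1}^*,\ldots,M_{\Theta_n}^*))$ is the claimed non-commuting co-isometric pseudo-extension.

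The hard part is the construction in the second paragraph: manufacturing the zero-corner unitaries $W_i$ from one fixed isometry $V$. The norm identity only supplies an isometry between the two relevant spans; promoting it to a zero-corner unitary extension, and doing so simultaneously for every $i$ against the same $V$, is where the work concentrates and where $n>2$ parts company with $n=2$. For a pair a single transposition-type untwisting serves both indices and can be arranged so that the two multipliers commute with product $z$; for $n>2$ the untwisting unitaries $U_i$, built from the different partial products $P_{\tilde{T_i}}$, do not form a commuting family, so $(M_{\Theta_1},\ldots,M_{\Theta_n})$ cannot be taken commuting. This is exactly the dilation-theoretic shadow of the Ando-versus-Parrott dichotomy recalled in the introduction, and it is why the theorem yields only a non-commuting pseudo-extension.
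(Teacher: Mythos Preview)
Your proposal is correct and follows essentially the same strategy as the paper: build $\pi$ from the defect $R$ of $N$ against $P_T$, then for each $i$ manufacture a zero-corner unitary $W_i$ satisfying the hypothesis of Lemma~\ref{sb3}, and read off the intertwining and the BCL shape of $\Theta_i$.

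There are two organizational differences worth noting. First, the paper does not fix a single isometry $V:\clr\to\cle$; instead it builds for each $i$ an isometry $V_i:\clr\to\cld_i\oplus\clr_i\oplus\tilde\clr_i$, extends all of them to unitaries $\tilde V_i:\clr\oplus\cld\to\cls_i$ with a common domain $\cle=\clr\oplus\cld$, applies Lemma~\ref{sb3} on each $\cls_i$, and only then pulls the resulting $\Phi_i$ back to $\cle$ via $\tilde V_i$ to obtain $\Theta_i$, $U_i$, $P_i$. Your single-$V$ formulation is the conjugated version of this and is equivalent; what you call ``a sufficiently large coefficient space'' is exactly the paper's $\clr\oplus\cld$. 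Second, your observation that for any unitary $\left[\begin{smallmatrix}\sa&\sanb\\\sanc&0\end{smallmatrix}\right]$ one has $\sa^*+z\sanc^*\sanb^*=(P+zP^\perp)U^*$ with $U=\sa+\sanb\sanc$ and $P=\sa^*\sa$ is a genuine streamlining: the paper instead obtains the BCL form by writing each $W_i$ explicitly as $\left[\begin{smallmatrix}\tilde U_i&0\\0&I\end{smallmatrix}\right]\left[\begin{smallmatrix}\tilde P_i&\iota_i\\\iota_i^*&0\end{smallmatrix}\right]$, from which the shape of the transfer function is read off directly. Your route avoids that explicit factorization and the subsequent pullback computation.
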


\begin{proof}
Let $N\in \clp\cll_+(T)$. Set $P_T:=T_1\cdots T_n$.
Consider the map $\pi:\clh\to H^2_{\clr}(\D)$, defined by $h\mapsto \sum_{k\in \Z_+}z^k RP_T^k h$, where 
$R^2:=N-P_T^*NP_T$ and $\clr=\overline{\text{Ran}}\, R$. Then by the same argument as given in the proof of Theorem~\ref{Re: 2_M_theta}, $\pi$ is a bounded operator and $\pi^*\pi=N$.
Since $T_i^*NT_i\le N$ for $i=1,\ldots n$, we set
\begin{align*}
    R^2_i:=N-T_i^*N T_i, \quad \clr_i:=\overline{\text{Ran}}\, R_i, \quad \tilde{R}^2_i:=N-P^*_{\Tilde{T_i}} N P_{\Tilde{T_i}},  \text{ and } \tilde{\clr_i}:=\overline{\text{Ran}}\,\tilde{R}_i.
\end{align*}
Using the operator identity 
 \begin{align}\label{Id_for_d}
     P_{\Tilde{T_i}}^* R^2_i P_{\Tilde{T_i}} + \tilde{R}^2_i= R^2_i+T^*_i \tilde{R}^2_i T_i=R^2, 
 \end{align}
for each $i=1,\dots,n$, we obtain an isometry 
\begin{align*}
\Tilde{U}_i: \{R_i P_{\Tilde{T_i}} h \oplus \tilde{R}_i h : h\in \clh\} \to \{R_i h \oplus \tilde{R}_i T_i h : h \in \clh\},
\end{align*}
defined by 
\begin{align}\label{U1}
    \Tilde{U}_i(R_i  P_{\Tilde{T_i}} h, \tilde{R}_i h)=( R_i h, \tilde{R}_i T_i h) \quad (h \in \clh).
\end{align}
Then, for $i=1, \ldots,n$, by adding a Hilbert space $\cld_i$ if necessary, we extend the isometry $\Tilde{U}_i$ to a unitary, again denoted by $\Tilde{U}_i$, acting on  $\cld_i \oplus \clr_i \oplus \tilde{\clr}_i$ satisfying 
\[\Tilde{U}_i(0_{\cld_i}, R_i  P_{\Tilde{T_i}} h, \tilde{R}_i h)=(0_{\cld_i}, R_i h, \tilde{R}_i T_i h) \quad ( h \in \clh).\]
Again, for each $i=1,\ldots,n$, using the identity \eqref{Id_for_d}, we get an isometry $V_i:\clr \to \cld_i \oplus \clr_i \oplus \tilde{\clr}_i $, defined by 
\[V_i (Rh) =(0_{\cld_i}, R_i h,  \tilde{R}_i T_i h) \quad (h\in \clh).\]
Now, given the isometries $V_i$'s, it is always possible to find Hilbert spaces $\cld$ and $\cle_1, \ldots, \cle_n$ such that each $V_i$ extends to a unitary $\Tilde{V}_i:\clr \oplus \cld \to \cld_i \oplus \cle_i \oplus \clr_i \oplus \tilde{\clr}_i$. Set $\cls_i:= \cld_i \oplus \cle_i\oplus \clr_i\oplus \tilde{\clr}_i$. Finally, we extend each $\Tilde{U}_i$ to a unitary on $\cls_i$, again denoted by $\Tilde{U}_i$, such that \[\Tilde{U}_i(0_{\cld_i}, 0_{\cle_i}, R_i  P_{\Tilde{T_i}} h, \tilde{R}_i h)=(0_{\cld_i}, 0_{\cle_i}, R_i h, \tilde{R}_i T_i h) \quad ( h \in \clh, i=1,\dots,n).\]
We now have all the ingredients necessary to construct the lifting of $T_i$ as we have done in the proof of Theorem~\ref{Re: 2_M_theta}.   For $i=1,\ldots,n$, with the help of the embedding 
 $\iota_i: \cld_i \oplus \cle_i\oplus \clr_i \to \cls_i$, defined by 
 \[\iota_i (d, h, k)=(d, h, k, 0) \quad (d\in \cld_i, h\in \cle_i, k\in \clr_i),\]
 and the orthogonal projection $\Tilde{P}_i: \cls_i \to \cls_i$, defined by
 \[\Tilde{P}_i(d, h, k, r)=(0, 0, 0, r) \quad (d \in\cld_i, h \in \cle_i, k \in \clr_i, r \in \tilde{\clr}_i),\]
 we define a unitray 
\[W_i: \cls_i \oplus (\cld_i \oplus \cle_i \oplus \clr_i) \to \cls_i \oplus (\cld_i \oplus \cle_i \oplus \clr_i),\]
such that its block matrix representation with respect to the above decomposition is given by 
\[W_i=
\begin{bmatrix}
\Tilde{U}_i & 0\\
0 & I
\end{bmatrix} 
\begin{bmatrix}
\Tilde{P}_i & \iota_i\\
\iota^*_i & 0
\end{bmatrix}.\]
 Now, by identifying $\clr$ with $\clr\oplus \{0\}\subseteq \clr\oplus\cld$, we have for $h\in \clh$ and $i=1,\dots,n$, 
\begin{align*}
    W_i \begin{pmatrix}
    \Tilde{V}_i R h\\
    (0_{\cld_i \oplus \cle_i}, R_i P_T h)
    \end{pmatrix}
    &= \begin{bmatrix}
    \Tilde{U}_i \Tilde{P}_i & \Tilde{U}_i \iota_i\\
    \iota^*_i & 0
    \end{bmatrix} \begin{pmatrix}
    V_i R h\\
    (0_{\cld_i \oplus \cle_i}, R_i P_T h)
    \end{pmatrix}\\
    &=\begin{bmatrix}
    \Tilde{U}_i \Tilde{P}_i & \Tilde{U}_i \iota_i\\
    \iota^*_i & 0
    \end{bmatrix} \begin{pmatrix}
    (0_{\cld_i \oplus \cle_i}, R_i h, \tilde{R}_i T_i h)\\
    (0_{\cld_i \oplus \cle_i}, R_i P_T h)
    \end{pmatrix}\\
   &= \begin{pmatrix}
    \Tilde{U}_i(0_{\cld_i \oplus \cle_i}, R_i P_T h, \tilde{R}_i T_i h )\\
    (0_{\cld_i \oplus \cle_i}, R_i h)
    \end{pmatrix}\\
    &= \begin{pmatrix}
    \Tilde{U}_i(0_{\cld_i \oplus \cle_i}, R_i \tilde{T}_i T_i h, \tilde{R}_i T_i h )\\
    (0_{\cld_i \oplus \cle_i}, R_i h)
    \end{pmatrix}\\
    &= \begin{pmatrix}
    (0_{\cld_i \oplus \cle_i}, R_i T_i h, \tilde{R}_i T^2_i h )\\
    (0_{\cld_i \oplus \cle_i}, R_i h)
    \end{pmatrix}\\
    &= \begin{pmatrix}
    \Tilde{V}_i R T_i h \\
    (0_{\cld_i \oplus \cle_i}, R_i h)
    \end{pmatrix}.
\end{align*}
Thus each $W_i$ satisfies the hypothesis of Lemma~\ref{sb3}
with $\sq=P_T, \sr=R, \s= R_i, \st= T_i, \tilde{\cle}=\cld_i\oplus \cle_i$,
and $\sv=\Tilde{V}_i|_{\clr}$, and thus, we have
\begin{equation}\label{inter identity}
\Tilde{\Pi} T_i = M^* _{\Phi_i} \Tilde{\Pi} \quad (i=1,\ldots,n),\end{equation}
where $\Phi_i(z)=(\Tilde{P}_i+ z \Tilde{P}_i^\perp) \Tilde{U}_i^*$ for $z \in \D$ and $\Tilde{\Pi}= (I_{H^2(\D)}\otimes \Tilde{V}_i)\pi$.
For each $i=1,\ldots,n$, we set $P_i:=\Tilde{V}^*_i \Tilde{P}_i \Tilde{V}_i$ and $U_i:=\Tilde{V}^*_i \Tilde{U}_i \Tilde{V}_i$. Then $P_i$ and $U_i$ are the orthogonal projection and unitary on $\cle$, where $\cle:=\clr \oplus \cld$. Again, by the identification of $\clr$ with $\clr\oplus \{0\}\subseteq \cle$, we can view the map $\pi$ ($=:\Pi$) from $\clh$ to $ H^2_\cle(\D)$ satisfying $\Pi^* \Pi=N$, and letting, for $i=1,\ldots,n$, $\Theta_i(z):=\Tilde{V}^*_i \Phi_i(z) \Tilde{V}_i=(P_i +z P^\perp_i)U^*_i$, 
we see that the intertwining identity \eqref{inter identity} now becomes 
\[\Pi T_i =M^*_{\Theta_i} \Pi\quad (i=1,\dots,n).\]
Moreover,
\begin{align*}
   M^*_z \Pi =\Pi P_T &= \Pi T_1 T_2 \dots T_n
    = M^*_{\Theta_1} \dots M^*_{\Theta_n} \Pi.
\end{align*}
This proves one direction of the theorem.
The other direction follows from the following:
for $i=1,\ldots,n$,
\begin{align*}
    T^*_i N T_i &= T^*_i \Pi^* \Pi T_i
    = \Pi^* M_{\Theta_i} M^*_{\Theta_i} \Pi
    \leq \Pi^* \Pi=N,
\end{align*}
and 
 for $h \in \clh$, 
\begin{align*}
    \|P_T^{* k} N P_T^k h\|&= \|P_T^{* k} \Pi^* \Pi P_T^k h\|
    = \|\Pi^* M^k _z M^{* k}_z \Pi h\| \to 0 \quad \text{as } k\to \infty.
\end{align*}
In the case, if $N\in \clp \cll_+(T)$ is a contraction, then $\Pi$ is a contraction. This completes the proof.
\end{proof}
It should be noted that the tuple of isometries $(M_{\Theta_1},\dots, M_{\Theta_n})$ do not commute, in general and which is the main difference with Theorem~\ref{Re: 2_M_theta}. However, as stated in the introduction, if we assume that $P_{T}^{* k}\to 0$ in SOT, then $I\in \clp\cll_+(T^*)$ and in this case the above theorem gives an explicit construction of Halmos dilation of $T$. Such a dilation result also obtained earlier in ~\cite{BS} using so called fundamental operators.


We illustrate Theorem \ref{Re: 2_M_theta} by the following example.
\begin{Example}
For $n=2$, we consider a pair of commuting matrices $T=(T_1, T_2)$ such that for $0<a<1$,
\[T_1=\begin{bmatrix}
    a & 0 & 0\\
    0& a & 0\\
    0& 0& 0
\end{bmatrix} \, \text{and }\, T_2=\begin{bmatrix}
    0 & 0 & 0\\
    0& 1 & 0\\
    0& 0& 1
\end{bmatrix}.\]
Then it is easy to verify that for $r, s, t\geq 0$, the matrix $N=\begin{bmatrix}
    r & 0 & 0\\
    0& s & 0\\
    0& 0& t
\end{bmatrix}$ is in $\clp\cll_+(T)$.
Consider the bounded operator $\Pi:\mathbb C^3 \to H^2_{\mathbb C^4}(\D)$, defined by
 \[(\Pi h)(z)=(\sqrt{r (1-a^2)} h_1, \sqrt{s (1-a^2)} h_2, \sqrt{t} h_3, a\sqrt{r} h_1)+ \sum_{k\geq 1}^\infty z^k (0, a^k \sqrt{s (1-a^2)} h_2, 0,0)\]
 for $z\in\D$ and $ h=(h_1, h_2, h_3)\in \mathbb C^3$, any unitary $U:\mathbb C^4 \to \mathbb C^4$ satisfying 
 \[U(0, \sqrt{s (1-a^2)} h_2, \sqrt{t} h_3, \sqrt{r} h_1)=(\sqrt{r (1-a^2)} h_1, \sqrt{s (1-a^2)} h_2, \sqrt{t} h_3, a \sqrt{r} h_1 ), \]
and 
the projection $P:\mathbb C^4 \to \mathbb C^4$, defined by $P(h_1, h_2, h_3, h_4)=(0,0,0, h_4)$ for $(h_1, h_2, h_3, h_4)\in \mathbb C^4$. Then, it is straight forward to check that 
\[N=\Pi^* \Pi,
\quad \text{and}\quad \Pi T_i= M^*_{\Phi_i}\Pi \quad(i=1,2),\]
where $\Phi_1(z)=(P+ z P^\perp) U^*$ and $\Phi_2(z)=U(P^\perp+ z P)$.  
\end{Example}

\section{$(S, T)$-Toeplitz operators }\label{Sec4}  
  
For $n$-tuples of commuting contractions $T=(T_1, \ldots,T_n)\in \clb(\clh)^n$ and $S=(S_1,\ldots,S_n)\in \clb(\clk)^n$, we study solutions of the following operator equations in this section:
\begin{align}\label{ME1}
    S^*_i X T_i=X\quad (i=1,\ldots,n),
\end{align}
where $X\in \clb(\clh,\clk)$. The solution space of the above operator equations \eqref{ME1} is denoted by $\T(S, T)$ and an element $X$ in $\T(S, T)$ is called an $(S, T)$-Toeplitz operator.
We first describe the canonical construction of $(S,T)$-Toeplitz operator corresponding to each elements in $\clb(\clh,\clk)$ using Banach limits. Let us recall few elementary facts about Banach limits.    
Consider the Banach space $l^{\infty}(\Z^n_+)$, defined by 
\[l^\infty(\Z^n_+):=\{x=\{x_\alpha\}_{\alpha \in \Z^n_+}: x_\alpha \in \mathbb{C}, \|x\|=\sup_{\alpha \in \Z^n_+} |x_\alpha| < \infty \}.\]
For $x=\{x_\alpha\}_{\alpha \in \Z^n_+} \in l^\infty(\Z^n_+)$, we set for each $i=1,\ldots,n$, $x^{(i)}:=\{x_{\alpha+e_i}\}_{\alpha \in \Z^n_+}$, where $e_i=(0,\ldots,0,\underbrace{1}_{i\text{th place}},0,\ldots,0) \in \Z_+^n$  and define a closed subspace $M_i$ of $l^\infty(\Z^n_+)$ as 
\[M_i:=\{x-x^{(i)}: x=\{x_\alpha\}_{\alpha \in \Z^n_+} \in l^\infty(\Z^n_+) \}.\]
Banach limits in our context will be a subset of the unit ball of the dual space $(l^\infty(\Z^n_+))^*$ denoted by $\clm(\Z^n_+)$ and consists of $\phi \in (l^\infty(\Z^n_+))^*$ such that $\phi(M)=0$ and $\phi(\bm{1})=1$, where $M=\cap_1^n M_i$ and $\bm{1}:=\{1, 1,\ldots,\}$. Note that $M$ is a non-trivial subspace of $l^\infty(\Z^n_+)$ as $\{1, 0, \ldots,\} \in M $, and it is easy to see that $\bm{1} \in l^\infty(\Z^n_+) \setminus M$. 


Let $T=(T_1,\ldots,T_n) \in \clb(\clh)^n$ and $S=(S_1,\ldots,S_n)\in \clb(\clk)^n$ be $n$-tuples of commuting contractions.
 For $X\in \clb(\clh, \clk)$ and for each $(h,k)\in \clh\times\clk$, 
 the net $\{\langle S^{* \alpha} X T^\alpha h, k \rangle\}_{\alpha \in \Z^n_+ } \in l^\infty(\Z^n_+)$. Now, for each $\phi \in \clm(\Z^n_+)$,
we define a sesquilinear form $B^X_\phi : \clh \times \clk \to \mathbb{C}$ by
\[B^X_\phi(h, k)=\phi (\{\langle S^{* \alpha} X T^\alpha h, k \rangle\}_{\alpha \in \Z^n_+ }) \quad (h\in\clh , k \in \clk).\]
A simple estimate shows that $B^X_{\phi}$ is a bounded sesquilinear form and $\|B^X_{\phi}\|\le \|\phi\|\|X\|\le \|X\|$. 
Hence, by the Riesz representation theorem, there exists $Y_\phi(X) \in \clb(\clh, \clk)$ such that for $h\in \clh, k \in \clk$, $B^X_\phi (h, k)=\langle Y_\phi (X) h, k \rangle$.
Also note that, for $h\in \clh, k \in \clk$, and $i=1,\ldots,n$, 
\begin{align*}
    \langle S^*_i Y_\phi(X) T_i h, k \rangle 
    &=B^X_\phi (T_i h, S_i k)\\
    &=\phi (\{\langle S^{* \alpha} X T^\alpha  T_i h, S_i k \rangle\}_{\alpha \in \Z^n_+ })\\
    &=\phi (\{\langle S^{* (\alpha +e_i)} X T^{(\alpha +e_i)} h, k \rangle\}_{\alpha \in \Z^n_+ })\\
    &= \phi (\{\langle S^{* \alpha} XT^\alpha h, k \rangle\}_{\alpha \in \Z^n_+ }),  \quad \text{as} \quad \phi(M)=0,\\
    &=\langle Y_\phi(X) h, k \rangle.
\end{align*}
Hence, $Y_\phi(X)\in \T(S, T)$. By the above analysis, corresponding to each $\phi\in \clm(\Z^n_+)$, the map \[Y_\phi :\clb(\clh, \clk) \to \T(S, T),\ X\mapsto Y_\phi(X),\] 
defines a bounded linear operator. Also since any bounded linear functional of norm one is completely contractive and the map $X\in \clb(\clh,\clk)\mapsto S^{*\alpha}XT^{\alpha}\in\clb(\clh,\clk)$ is completely contractive for any $\alpha\in \Z_+^n$, it follows that $Y_{\phi}$ is in fact a completely contractive map.
By the construction it also follows that for $X\in \T(S, T)$, $Y_\phi(X)=X$.  

Thus we have proved the following theorem. For the base case $n=1$, the result is obtained in ~\cite{GC}. 
\begin{Theorem}\label{B_limt}
Let $T=(T_1,\ldots,T_n) \in \clb(\clh)^n$ and $S=(S_1,\ldots,S_n)\in \clb(\clk)^n$ be $n$-tuples of commuting contractions. Then for $X\in \clb(\clh, \clk)$ and $\phi \in \clm(\Z^n_+)$, $Y_\phi(X)\in \T(S, T)$. For each $\phi \in \clm(\Z^n_+)$, the map 
\[Y_\phi :\clb(\clh, \clk) \to \T(S, T), \ X\mapsto Y_\phi(X),\] is a surjective completely contractive map. Moreover, if $X\in \T(S, T)$, then $Y_\phi(X)=X$ for all $\phi \in \clm(\Z^n_+)$.   
\end{Theorem}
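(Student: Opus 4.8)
The plan is to assemble the pieces from the construction preceding the statement and to supply the two points not yet fully spelled out there: the complete contractivity and the surjectivity of $Y_\phi$. First I would fix $\phi\in\clm(\Z^n_+)$ and $X\in\clb(\clh,\clk)$ and record that, since every $S_i$ and $T_i$ is a contraction, the scalar net $\{\langle S^{*\alpha}XT^\alpha h,k\rangle\}_{\alpha\in\Z^n_+}$ is bounded by $\|X\|\,\|h\|\,\|k\|$ and hence lies in $l^\infty(\Z^n_+)$. Applying $\phi$ gives the sesquilinear form $B^X_\phi$ with $\|B^X_\phi\|\le\|\phi\|\,\|X\|\le\|X\|$, so the Riesz representation theorem produces a well-defined $Y_\phi(X)\in\clb(\clh,\clk)$, and linearity of $Y_\phi$ in $X$ is inherited from linearity of $\phi$ and of the form.

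The membership $Y_\phi(X)\in\T(S,T)$ is the conceptual heart. Here I would compute, for $h\in\clh$ and $k\in\clk$, that $\langle S_i^*Y_\phi(X)T_i h,k\rangle=B^X_\phi(T_ih,S_ik)=\phi(x^{(i)})$, where $x=\{\langle S^{*\alpha}XT^\alpha h,k\rangle\}_\alpha$ and $x^{(i)}$ is its $e_i$-shift obtained by the re-indexing $\alpha\mapsto\alpha+e_i$. The shift-invariance encoded in the requirement $\phi(M)=0$ then gives $\phi(x^{(i)})=\phi(x)=\langle Y_\phi(X)h,k\rangle$, whence $S_i^*Y_\phi(X)T_i=Y_\phi(X)$ for each $i=1,\dots,n$. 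This is the only step where the defining properties of the Banach limit are genuinely used; everything else is formal.

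For the structural claims I would argue as follows. For complete contractivity, note that under the identification $M_m(\clb(\clh,\clk))\cong\clb(\clh^{\oplus m},\clk^{\oplus m})$ the $m$-fold amplification of $X\mapsto S^{*\alpha}XT^\alpha$ is $Y\mapsto(S^{*\alpha}\otimes I_m)\,Y\,(T^\alpha\otimes I_m)$, which is contractive since both factors are contractions; as $\phi$ is a norm-one functional and hence completely contractive, combining these shows every amplification of $Y_\phi$ is contractive, so $Y_\phi$ is completely contractive. The fixed-point property then drives surjectivity: if $X\in\T(S,T)$, an induction on $|\alpha|$ from $S_i^*XT_i=X$ yields $S^{*\alpha}XT^\alpha=X$ for all $\alpha$, so $x=\langle Xh,k\rangle\,\bm{1}$ is constant and $\phi(\bm{1})=1$ gives $Y_\phi(X)=X$; in particular $Y_\phi$ maps $\clb(\clh,\clk)$ onto $\T(S,T)$.

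The main obstacle I anticipate is not a single hard computation but phrasing the complete-contractivity step cleanly, so that the contractivity of $\phi$ and of the transforms $X\mapsto S^{*\alpha}XT^\alpha$ combine correctly across matrix amplifications. The Toeplitz identity, though central, reduces to the short shift-invariance argument once the re-indexing $\alpha\mapsto\alpha+e_i$ is in place, and the fixed-point computation that yields surjectivity is routine.
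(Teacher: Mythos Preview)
Your proposal is correct and follows essentially the same route as the paper: the paper also builds $Y_\phi(X)$ via the bounded sesquilinear form $B^X_\phi$, verifies $S_i^*Y_\phi(X)T_i=Y_\phi(X)$ by the same shift-invariance argument using $\phi(M)=0$, obtains complete contractivity from the fact that a norm-one functional is completely contractive together with the contractivity of $X\mapsto S^{*\alpha}XT^{\alpha}$, and deduces surjectivity from the fixed-point property $Y_\phi(X)=X$ for $X\in\T(S,T)$. Your write-up is slightly more explicit on the surjectivity and complete-contractivity steps, but the argument is the same.
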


By the above theorem, one concludes that $\T(S,T)$ is non-trivial if and only if the range of the map $Y_{\phi}$ is not $\{0\}$ for any $\phi\in \clm(\Z^n_+)$. However, it is difficult to check whether $Y_{\phi}$ is $\{0\}$ or not, in general. Below, we provide two necessary conditions in terms of isometric and unitary pseudo-extensions for $\T(S, T)$ to be non-trivial. 
Here, we use the terminology that two pseudo-extensions $(\clj, \clk,U)$ and $(\Tilde{\clj}, \Tilde{\clk},\Tilde{U})$ of $T$ are unitarily equivalent if there exists a unitary $W:\clk \to \Tilde{\clk}$ such that 
\[W U_i=\Tilde{U}_i W, \, i=1, \ldots,n, \, \text{and}\,\ W \clj=\Tilde{\clj}.\]

\begin{Proposition}\label{Th2}

 Let  $S=(S_1,\ldots,S_n)\in \clb(\clk)^n$ and $T=(T_1, \ldots,T_n)\in \clb(\clh)^n$ be $n$-tuples of commuting contractions. If $\T(S, T)\neq\{0\}$, then 
 \item
 (a) Adjoint of the product contractions $P_T$ and $P_S$ are not pure, where $P_T=T_1\cdots T_n$ and $P_S=S_1\cdots S_n$. 
 \item
   (b) $S$ and $T$ have canonical isometry pseudo-extensions. 
 \item
   (c) $S$ and $T$ have unique (up to unitary equivalence) canonical unitary pseudo-extensions.
 \end{Proposition}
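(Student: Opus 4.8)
The plan is to establish (a) directly, deduce (b) from it through the already-proved Theorem~\ref{non-zero Toeplitz}, and then manufacture the unitary pseudo-extensions of (c) from the isometric ones supplied by (b). For (a), I would fix a nonzero $X\in\T(S,T)$. Using $S_i^*XT_i=X$ together with the commutativity of the two tuples, a telescoping computation gives $P_S^{*k}XP_T^k=X$ for every $k\in\Z_+$. If the adjoint of $P_T$ were pure, i.e. $\|P_T^kh\|\to0$ for all $h$, then $\|Xh\|=\|P_S^{*k}XP_T^kh\|\le\|X\|\,\|P_T^kh\|\to0$ would force $X=0$, a contradiction; hence the adjoint of $P_T$ is not pure. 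Taking adjoints in the defining relation yields $T_i^*X^*S_i=X^*$, so $X^*$ is a nonzero element of $\T(T,S)$, and the same argument applied to $X^*$ rules out purity of the adjoint of $P_S$. Part (b) is then immediate: by the equivalence $(2)\Leftrightarrow(3)$ of Theorem~\ref{non-zero Toeplitz}, the non-purity of the adjoints of $P_T$ and $P_S$ produces canonical isometric pseudo-extensions of $T$ and of $S$.

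For the existence half of (c) I would start from the canonical isometric pseudo-extension $(\clj_T,\clq_T,V)$ of $T$ built in the proof of Theorem~\ref{non-zero Toeplitz}, whose tuple $V=(V_1,\dots,V_n)$ consists of commuting isometries with $\clj_T^*\clj_T=Q_T^2$. Since a commuting family of isometries always extends to a commuting family of unitaries on a larger space, I would extend $V$ to commuting unitaries $W=(W_1,\dots,W_n)$ with $W_i|_{\clq_T}=V_i$ and then pass to the minimal joint reducing subspace $\clm$ for $W$ containing $\clj_T\clh$. Because $\clj_T\clh\subseteq\clq_T$ and $W_i|_{\clq_T}=V_i$, the intertwining $W_i\clj_T=V_i\clj_T=\clj_T T_i$ and the identity $\clj_T^*\clj_T=Q_T^2$ persist, so $(\clj_T,\clm,W|_\clm)$ is a canonical unitary pseudo-extension; the same construction applies to $S$.

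For uniqueness, let $(\clj,\clk,W)$ and $(\clj',\clk',W')$ be two canonical unitary pseudo-extensions of $T$, so that $\clj^*\clj=(\clj')^*\clj'=Q_T^2$. Writing $W^\gamma$ for $\gamma\in\Z^n$ via the inverses $W_i^{-1}=W_i^*$, minimality and commutativity give $\clk=\overline{\mathrm{span}}\{W^\gamma\clj h:\gamma\in\Z^n,\ h\in\clh\}$ and likewise for $\clk'$. I would define $\tau(W^\gamma\clj h)=(W')^\gamma\clj'h$ and verify that it is a well-defined surjective isometry which intertwines the two tuples and sends $\clj$ to $\clj'$. The key step is that, given finitely many exponents $\gamma_j$, one may choose $\mu\in\Z_+^n$ with $\gamma_j+\mu\ge0$ for all $j$; then $W^{\gamma_j}\clj=W^{-\mu}\clj\,T^{\gamma_j+\mu}$, so every inner product $\langle W^{\gamma_j}\clj h_j,W^{\gamma_l}\clj h_l\rangle$ collapses to $\langle Q_T^2 T^{\gamma_j+\mu}h_j,\,T^{\gamma_l+\mu}h_l\rangle$, which depends only on $Q_T^2$ and therefore matches the primed computation verbatim. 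This yields the sought unitary equivalence, proving uniqueness for $T$ and, identically, for $S$.

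The genuinely non-formal input is the existence of a commuting unitary extension of the commuting isometric tuple $V$, together with the verification that cutting down to $\clm$ keeps the pseudo-extension both minimal and canonical; once that is in hand, everything else is bookkeeping. The one elegant device is the ``shift by $\mu$'' trick in the uniqueness argument, which reduces every inner product of words in the $W_i$ and $W_i^*$ to the single canonical operator $Q_T^2$ and thereby forces the two extensions to coincide.
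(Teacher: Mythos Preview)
Your proposal is correct and follows essentially the same route as the paper. Part (a) is identical; part (b) is the same appeal to Theorem~\ref{non-zero Toeplitz}; and for (c) both you and the paper build the unitary pseudo-extension by taking a minimal commuting unitary extension of the isometric tuple from (b), then prove uniqueness by showing that inner products of the spanning vectors $W^{\gamma}\clj h$ depend only on $\clj^*\clj=Q_T^2$. Your ``shift by $\mu$'' device is just a clean repackaging of the paper's computation $\clj^*\hat U^{*\beta}\hat U^{\alpha}\clj=T^{*\beta}\clj^*\clj\,T^{\alpha}$; both arguments implicitly use the fact that $T_i^*Q_T^2T_i=Q_T^2$ (so the reduced expression is independent of the chosen shift), which follows from sandwiching $P_T^{*k}T_i^*T_iP_T^k$ between $P_T^{*(k+1)}P_T^{k+1}$ and $P_T^{*k}P_T^k$ and passing to the limit.
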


\begin{proof}[Proof of part (a):]

We prove by the contradictory method. Let us assume that the adjoint of the product contraction $P_T$ is pure. Let $X$ be a non-zero operator such that $X\in \T(S,T)$. Since for $i=1,\ldots,n$, $S_i^* X T_i= X$, we also have 
\begin{align} \label{pure_1}
    P_S^{* k}X P_T^k=X \quad (k\in \Z_+).
\end{align}
Then for $h \in \clh$, 
\begin{align*}
    \|X h\|&=\|P_S^{*k} X P_T^k h\|
     \leq \|P_S^{* k}\| \|X\| \|P_T^k h\|
    \leq \|X\| \|P_T^k h\| 
     \to 0 \quad \text{as }k\to \infty. 
\end{align*}
Therefore, $X=0$, which contradicts that $X$ is a non zero operator. 
 Taking adjoint of the equation \eqref{pure_1}, we can similarly prove that the adjoint of $P_S$ is not a pure contraction.
 

 
 \noindent\textit{Proof of part (b)}: Since the adjoint of product contractions $P_T$ and $P_S$ are not pure. Then the proof follows from Theorem~\ref{non-zero Toeplitz}. 
 
 \noindent\textit{Proof of part (c)}: We only show that $T$ has a canonical unitary pseudo-extension as the arguments for $S$ is identical. Since the adjoint of $P_T$ is not pure, let $(\clj_T, \clq_T, V)$ be a canonical isometric pseudo-extension of $T$ as constructed in the proof of Theorem~\ref{non-zero Toeplitz}, 
 where $V=(V_1,\ldots, V_n)$ on $\clq_T$ is a $n$-tuple of commuting isometries.  
 Suppose that $U=(U_1, \ldots,U_n)$ on $\clk_T$ is a minimal unitary extension of $V$. Since $\clq_T\subseteq \clk_T$, we view $\clj_T:\clh\to \clk_T$ and note that 
for $i=1,\ldots,n$ and $h\in \clh$, 
 \[ U_i \clj_T h=V_i \clj_T h=\clj_T T_i h,\]
 and
\[\clj_T^* \clj_T= \text{SOT}-\lim_{k \to \infty} P_T^{* k}P_T^k.\]
Therefore, $(\clj_T, \clk_T, U)$ is a canonical unitary pseudo-extension of $T$. 
For uniqueness, let us assume that $(\clj, \clk, \hat{U}=(\hat{U}_1, \ldots, \hat{U}_n))$ and $(\Tilde{\clj}, \Tilde{\clk}, \Tilde{U}=(\Tilde{U}_1, \ldots\Tilde{U}_n))$ be canonical unitary pseudo-extension of $T$. 
 Since canonical pseudo-extensions are minimal, using minimality we define an operator $W:\clk \to \Tilde{\clk}$, by
 \[f(\hat{U}, \hat{U}^*)\clj h\mapsto f(\Tilde{U}, \Tilde{U}^*)\Tilde{\clj}h\]
 for $h\in \clh$ and polynomial $f$ in $\z$ and $\overline{\z}$ ($\z=(z_1, \ldots,z_n)\in \mathbb{C}^n$). If $f=\sum a_{\alpha, \beta} \z^\alpha \overline{\z}^\beta$, then for $h\in \clh$,
\begin{align*}
    \|f(\hat{U}, \hat{U}^*)\clj h\|^2=&\sum a_{\alpha,\beta} \langle \clj^* \hat{U}^{* \beta} \hat{U}^\alpha \clj h, h \rangle\\ 
    =&\sum a_{\alpha,\beta} \langle T^{* \beta} \clj^* \clj T^\alpha h, h \rangle\\
     =&\text{SOT}-\lim_{k\to \infty}\sum a_{\alpha,\beta} \langle T^{* \beta} P_T^{* k} P^k_T T^\alpha h, h \rangle.
\end{align*}
Since the last term only depends on $T$, $W$ is a unitary. 
It is evident from the definition $W$ that $W \hat{U}_i=\Tilde{U}_i W$ ($i=1,\ldots,n$) and $W \clj=\Tilde{\clj}$. 
This completes the proof.
 \end{proof}

It turns out that none of the assertions in Proposition~\ref{Th2} is sufficient for the non-triviality of $\T(S, T)$. The following example demonstrates it.
\begin{Example}
For $n = 1$ and $\clh=\clk= H^2(\D)$, we consider $S$ as a unilateral shift ($M_z$) on $H^2(\D)$ and $T$ as an identity operator on $H^2(\D)$. It is clear that adjoint of $M_z$ and $I$ are not pure. However, in this case $\T(S,T)=\{0\}$ as any operator which intertwines a unitary and a pure contraction has to be zero.
\end{Example}

We have seen in Proposition~\ref{Th2} that the existence of a non-zero element in $\T(S,T)$ implies $S$ and $T$ have canonical isometric pseudo-extensions $(\clj_S, \clq_S, W)$ and $(\clj_T, \clq_T, V)$, respectively. In such a situation, it is natural to ask how $\T(S,T)$ and $\T(W, V)$ are related? The following result answer it immaculately. 

 \begin{Theorem}
Let $T=(T_1,\ldots,T_n) \in \clb(\clh)^n$ and $S=(S_1,\ldots,S_n)\in \clb(\clk)^n$ be $n$-tuples of commuting contractions, and let $(\clj_T, \clq_T, V)$ and  $(\clj_S, \clq_S, W)$ be canonical isometric pseudo-extensions of $T$ and $S$, respectively. Then 
$$\T(S,T)=\clj_S^* \T(W, V) \clj_T.$$ 
Moreover, for each  $A\in \T(S,T)$, there exists a $B\in \T(W, V)$ such that $A=\clj_S^* B \clj_T$ and $\|A\|=\|B\|$.
\end{Theorem}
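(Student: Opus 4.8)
\section*{Proof proposal}

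The plan is to establish the two inclusions separately; the inclusion $\clj_S^* \T(W,V)\clj_T \subseteq \T(S,T)$ is routine, while the reverse inclusion (together with the norm statement) carries all the content. For the easy direction I would take $B\in\T(W,V)$, so that $W_i^* B V_i=B$, and set $A=\clj_S^* B \clj_T$. Using the intertwining relations $\clj_T T_i=V_i\clj_T$ and $\clj_S S_i=W_i\clj_S$ (whence $S_i^*\clj_S^*=(\clj_S S_i)^*=\clj_S^* W_i^*$), one computes
\[
S_i^* A T_i=(S_i^*\clj_S^*)\,B\,(\clj_T T_i)=\clj_S^* W_i^* B V_i \clj_T=\clj_S^* B \clj_T=A,
\]
so $A\in\T(S,T)$.

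For the reverse inclusion, fix $A\in\T(S,T)$. Iterating $S_i^* A T_i=A$ over the product yields $A=P_S^{*k}A P_T^k$ for every $k\in\Z_+$. I would construct the desired $B$ by first defining a sesquilinear form on the dense subspaces $\text{Ran}\,Q_T\subseteq\clq_T$ and $\text{Ran}\,Q_S\subseteq\clq_S$ via
\[
\langle B\clj_T h,\clj_S g\rangle=\langle A h,g\rangle\qquad (h\in\clh,\ g\in\clk),
\]
recalling that $\clj_T h=Q_T h$, $\clj_S g=Q_S g$, and that $\|\clj_T h\|^2=\langle Q_T^2 h,h\rangle=\lim_k\|P_T^k h\|^2$ and $\|\clj_S g\|^2=\lim_k\|P_S^k g\|^2$.

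The crux, and the main obstacle, is to show this form is well defined and bounded. For well-definedness I would verify that $Q_T h=0$ forces $Ah=0$: indeed $\|Ah\|=\|P_S^{*k}A P_T^k h\|\le\|A\|\,\|P_T^k h\|\to 0$ since $\|P_T^k h\|^2\to\|Q_T h\|^2=0$; taking adjoints in $A=P_S^{*k}A P_T^k$ gives $A^*=P_T^{*k}A^* P_S^k$, hence the symmetric statement $Q_S g=0\Rightarrow A^*g=0$, which handles the second variable. For boundedness, from $A=P_S^{*m}A P_T^m$ I would estimate
\[
|\langle A h,g\rangle|=|\langle A P_T^m h,P_S^m g\rangle|\le\|A\|\,\|P_T^m h\|\,\|P_S^m g\|,
\]
and let $m\to\infty$ to get $|\langle A h,g\rangle|\le\|A\|\,\|\clj_T h\|\,\|\clj_S g\|$. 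Thus the form extends to a bounded sesquilinear form on $\clq_T\times\clq_S$ of norm at most $\|A\|$, and the Riesz representation theorem produces a unique $B\in\clb(\clq_T,\clq_S)$ with $\clj_S^* B\clj_T=A$ and $\|B\|\le\|A\|$.

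Finally I would check that $B\in\T(W,V)$ and that the norms agree. Using $V_i\clj_T h=\clj_T T_i h$, $W_i\clj_S g=\clj_S S_i g$, and $S_i^* A T_i=A$, one finds $\langle W_i^* B V_i\clj_T h,\clj_S g\rangle=\langle A T_i h,S_i g\rangle=\langle S_i^* A T_i h,g\rangle=\langle A h,g\rangle=\langle B\clj_T h,\clj_S g\rangle$, so density of $\clj_T\clh$ and $\clj_S\clk$ gives $W_i^* B V_i=B$. The bound $\|B\|\le\|A\|$ is already in hand, and since $A=\clj_S^* B\clj_T$ with $\clj_S,\clj_T$ contractions (as $\clj^*\clj=Q^2\le I$ for a product of contractions), we also get $\|A\|\le\|B\|$, whence $\|A\|=\|B\|$. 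This simultaneously establishes $\T(S,T)\subseteq\clj_S^*\T(W,V)\clj_T$ and the concluding assertion.
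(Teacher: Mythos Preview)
Your proof is correct and takes a genuinely different route from the paper's. The paper proceeds in two steps via Douglas' factorization lemma: first it shows $AA^*\le \clj_S^*\clj_S$ (from $AA^*=P_S^{*k}AP_T^kP_T^{*k}A^*P_S^k\le P_S^{*k}P_S^k$) to factor $A=\clj_S^* C$ with $C$ a contraction, then argues $W_i^*CT_i=C$ using injectivity of $\clj_S^*$, and repeats the same trick to factor $C=B\clj_T$. Your approach bypasses Douglas entirely by building $B$ in one stroke from the bounded sesquilinear form $(\clj_T h,\clj_S g)\mapsto\langle Ah,g\rangle$, with the key estimate $|\langle Ah,g\rangle|\le \|A\|\,\|P_T^m h\|\,\|P_S^m g\|\to\|A\|\,\|\clj_T h\|\,\|\clj_S g\|$ doing all the work. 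Your method is more direct and yields the sharp bound $\|B\|\le\|A\|$ without any normalization; the paper's two-step factorization is perhaps more in keeping with the operator-theoretic flavor of the surrounding material and makes the intermediate object $C\in\T(W,T)$ visible, but neither approach has a real advantage over the other. Note that both arguments rely on the density of $\clj_T\clh$ in $\clq_T$ (and similarly for $S$), which holds for the specific canonical pseudo-extension constructed in Theorem~\ref{non-zero Toeplitz}; you use this explicitly, as does the paper.
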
 

\begin{proof}
Let us assume that $A \in \T (W, V)$. By the hypothesis, for $i=1,\ldots,n$, $W_i^* A V_i= A, V_i \clj_T = \clj_T T_i$ and $ W_i \clj_S = \clj_S S_i$. Then for $i=1,\dots,n$,
\begin{align*}
    S_i^* \clj_S^* A \clj_T T_i &=\clj_S^* W^*_i A V_i \clj_T
    =\clj_S^* A \clj_T.
\end{align*}
Thus, $\clj_S^* A \clj_T \in \T(S,T)$, and therefore $\clj_S^* \T(W, V) \clj_T \subseteq \T(S,T)$. For the other containment, let $A \in \T(S,T)$ be such that $\|A\| \leq 1$.
If $P_S=S_1\cdots S_n$ and $P_T= T_1\cdots T_n$, then 
\[ P_S^{* k}A P_T^k=A  \quad (k\in \Z_+).\]
Consequently, for $k\in\Z_+$,
\begin{align*}
    AA^*&=P_S^{* k} A P_T^k P_T^{* k} A^* P_S^k
    \leq P_S^{* k} A A^* P_S^k
    \leq P_S^{* k} P_S^k.
\end{align*}
Since $(\clj_S, \clq_S, W)$ is a canonical isometric pseudo-extension of $S$, passing to the SOT limit in the above inequality we get $A A^*\leq \clj^*_S\clj_S$. Therefore, by the Douglas' factorization theorem, we get a contraction $C:\clh \to \clq_S$ such that $A=\clj_S^* C$. Since the isometric pseudo-extension $(\clj_S, \clq_S, W)$ of $S$ is minimal, then $\overline{\text{Ran}}\, \clj_S= \clq_S$ and $\clj_S^*$ is one-to-one map on $\clq_S$. So the identity   
\[
\clj_S^*C= A= S_i^*A T_i= S_i^*\clj_S^*C T_i= \clj_S^* W_i^*CT_i
\]
implies that $W_i^* C T_i = C$ for $i=1,\ldots, n$. Furthermore, for $k\in\Z_+$, $P_W^{ * k} C P_T^k=C$,
where $P_W=W_1\cdots W_n$. Again by a similar argument as before, we have for $k\in \Z_+$, 
\begin{align*}
    C^*C&=P_T^{* k} C^* P_W^{ k} P_W^{* k} C P_T^k
    \leq P_T^{* k} C^* C P_T^k
    \leq P_T^{* k} P_T^k.
\end{align*}
Therefore, $C^* C\leq \clj^*_T\clj_T$ as $(\clj_T, \clq_T, V)$ is a canonical pseudo-extension of $T$. Applying the Douglas' factorization theorem, we obtain a contraction $B:\clq_T \to \clq_S$ such that $C= B \clj_T$. Consequently, for each $i=1,\ldots,n$, 
\begin{align*}
    B \clj_T= C= W_i^* CT_i= W_i^* B\clj_TT_i=W_i^*BV_i \clj_T,
\end{align*}
and this implies that $W^*_i B V_i=B$ for $i=1,\ldots,n$. Thus, $A= \clj_S^* C= \clj_S^* B\clj_T$ for some $B\in\T(W,V)$ and $\|B\|\le 1$.
Hence, $\T(S,T)=\clj_S^* \T(W, V) \clj_T$.
Furthermore, the above construction shows that if $A$ is a contraction, then we can choose $B$ to be a contraction as well. This proves that for $A \in \T(S,T)$, we can find $B\in \T(W, V)$ such that $A= \clj_S^* B \clj_T$ and $\|A\|=\|B\|$. This completes the proof.
\end{proof}

It is well known that a 
tuple of commuting isometries on a Hilbert space always extends to a tuple of commuting unitaries. As a consequence any canonical isometric pseudo-extension also can be extended to a canonical unitary pseudo-extension. Then next result describes the effects on Toeplitz operators if one passes to unitary extensions.
This result can be obtained by considering the abelian semigroup generated by the tuple of isometries and the semigroup of their unitray extensions and then invoking the result of P. Muhly (\cite{PM}). However, we provide a direct proof as the set of arguments are quite standard. 

\begin{Theorem}
Let $V=(V_1,\ldots, V_n)\in \clb(\clh)^n$ and $W=(W_1,\ldots, W_n)\in \clb(\clk)^n$ be $n$-tuples of commuting isometries with minimal unitary extensions $\Tilde{V}=(\Tilde{V_1},\ldots,\Tilde{V_n})\in \clb(\tilde{\clh})^n$ and $\Tilde{W}=(\Tilde{W_1},\ldots,\Tilde{W_n})\in \clb(\tilde{\clk})^n$, respectively. Then $$\T(W,V)= P_{\clk} \T(\Tilde{W}, \Tilde{V})|_{\clh}.$$ 
Moreover, for each $A \in \T(W, V)$, there is a $B \in \T(\Tilde{W},\Tilde{V})$ such that $A=P_{\clk} B |_{\clh} $ with $\|A\|=\|B\|$. 
\end{Theorem}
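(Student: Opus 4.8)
The plan is to prove the two inclusions in $\T(W,V)=P_{\clk}\T(\Tilde W,\Tilde V)|_{\clh}$ separately and to extract the norm statement from the harder one. The standing structural facts I will use are that, since $\Tilde V$ and $\Tilde W$ are \emph{extensions}, the subspaces $\clh\subseteq\tilde{\clh}$ and $\clk\subseteq\tilde{\clk}$ are invariant under each $\Tilde V_i$ and $\Tilde W_i$, with $\Tilde V_i|_{\clh}=V_i$ and $\Tilde W_i|_{\clk}=W_i$; consequently $\Tilde V^{\alpha}h=V^{\alpha}h\in\clh$ and $\Tilde W^{\alpha}k=W^{\alpha}k\in\clk$ for all $h\in\clh$, $k\in\clk$, $\alpha\in\Z^n_+$. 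For the easy inclusion $P_{\clk}\T(\Tilde W,\Tilde V)|_{\clh}\subseteq\T(W,V)$, I would take $B\in\T(\Tilde W,\Tilde V)$ and put $A:=P_{\clk}B|_{\clh}$. Since each $\Tilde W_i$ is unitary, $\Tilde W_i^*B\Tilde V_i=B$ is the same as $B\Tilde V_i=\Tilde W_i B$, and for $h\in\clh$, $k\in\clk$ the computation
\[
\langle W_i^*AV_ih,k\rangle=\langle AV_ih,W_ik\rangle=\langle BV_ih,W_ik\rangle=\langle B\Tilde V_ih,\Tilde W_ik\rangle=\langle \Tilde W_i^*B\Tilde V_ih,k\rangle=\langle Ah,k\rangle
\]
finishes it: here $AV_ih=P_{\clk}B(V_ih)$, the relation $W_ik=\Tilde W_ik\in\clk$ lets the projection be dropped against $W_ik$, and $V_ih=\Tilde V_ih$. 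Hence $W_i^*AV_i=A$, i.e.\ $A\in\T(W,V)$.

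For the substantive inclusion $\T(W,V)\subseteq P_{\clk}\T(\Tilde W,\Tilde V)|_{\clh}$, I would not extend $A$ by hand; instead I would feed it into the Banach-limit machine of Theorem~\ref{B_limt} applied to the pair $(\Tilde W,\Tilde V)$ of commuting unitaries. Given $A\in\T(W,V)$, let $\iota_{\clk}:\clk\hookrightarrow\tilde{\clk}$ and $P_{\clh}:\tilde{\clh}\to\clh$ be the inclusion and the orthogonal projection, and set $A_0:=\iota_{\clk}AP_{\clh}\in\clb(\tilde{\clh},\tilde{\clk})$, which agrees with $A$ on $\clh$, vanishes on $\tilde{\clh}\ominus\clh$, and satisfies $\|A_0\|=\|A\|$. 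Fix any $\phi\in\clm(\Z^n_+)$ and put $B:=Y_{\phi}(A_0)$; by Theorem~\ref{B_limt} we have $B\in\T(\Tilde W,\Tilde V)$ and $\|B\|\le\|A_0\|=\|A\|$. The key point is that the defining net collapses on $\clh\times\clk$: for $h\in\clh$, $k\in\clk$,
\[
\langle \Tilde W^{*\alpha}A_0\Tilde V^{\alpha}h,k\rangle=\langle A_0V^{\alpha}h,\Tilde W^{\alpha}k\rangle=\langle AV^{\alpha}h,W^{\alpha}k\rangle=\langle W^{*\alpha}AV^{\alpha}h,k\rangle=\langle Ah,k\rangle,
\]
using in turn $\Tilde V^{\alpha}h=V^{\alpha}h\in\clh$, then $A_0V^{\alpha}h=AV^{\alpha}h\in\clk$ together with $\Tilde W^{\alpha}k=W^{\alpha}k\in\clk$, and finally the iterated Toeplitz identity $W^{*\alpha}AV^{\alpha}=A$ (which follows from $W_i^*AV_i=A$ by commutativity of the tuples). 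Thus the net equals the constant $\langle Ah,k\rangle\,\bm{1}$, and since $\phi(\bm{1})=1$ we get $\langle Bh,k\rangle=\langle Ah,k\rangle$, that is $P_{\clk}B|_{\clh}=A$.

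Combining the two inclusions yields the asserted equality, and the norm statement then falls out at once: the operator $B$ produced above satisfies $\|B\|\le\|A\|$, while $A=P_{\clk}B|_{\clh}$ forces $\|A\|\le\|B\|$, so $\|A\|=\|B\|$, which is the ``moreover'' clause. I expect the only genuine obstacle to be this reverse inclusion. A direct attempt to build an extension via the intertwining prescription $\Tilde V^{*\alpha}h\mapsto\Tilde W^{*\alpha}Ah$ founders on well-definedness and boundedness, precisely because an $(W,V)$-Toeplitz operator need only satisfy $W_i^*AV_i=A$ and \emph{not} the naive intertwining $AV_i=W_iA$. The role of Theorem~\ref{B_limt} is to bypass this difficulty entirely: it manufactures a bona fide $(\Tilde W,\Tilde V)$-Toeplitz operator $B$ from the padded symbol $A_0$, and the invariance of $\clh$ and $\clk$ under the unitaries is exactly what makes the Banach-limit net constant on $\clh\times\clk$, pinning its compression down to $A$.
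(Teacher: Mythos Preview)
Your proof is correct, and the easy inclusion matches the paper's argument essentially line for line. The difference lies in the substantive inclusion. The paper extends a given $C\in\T(W,V)$ by hand: it sets $B_\alpha:=\Tilde W^{*\alpha}P_{\clk}CP_{\clh}\Tilde V^{\alpha}$, introduces the increasing nets of projections $P_{1,\alpha}=\Tilde V^{*\alpha}P_{\clh}\Tilde V^{\alpha}$ and $P_{2,\alpha}=\Tilde W^{*\alpha}P_{\clk}\Tilde W^{\alpha}$, checks the compatibility relation $P_{2,\beta}B_\alpha P_{1,\beta}=B_\beta$ for $\alpha\ge\beta$, and then uses \emph{minimality} of the extensions (so that $P_{1,\alpha}\uparrow I_{\tilde\clh}$ and $P_{2,\alpha}\uparrow I_{\tilde\clk}$) to conclude that $\{B_\alpha\}$ converges in the weak operator topology to the desired $B$. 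You instead feed the same padded operator $A_0=\iota_{\clk}AP_{\clh}$ into the Banach-limit map $Y_\phi$ of Theorem~\ref{B_limt} for the unitary pair $(\Tilde W,\Tilde V)$, obtaining $B\in\T(\Tilde W,\Tilde V)$ with $\|B\|\le\|A\|$ for free; your key observation is that the defining net is already \emph{constant} on $\clh\times\clk$, so $\phi(\bm 1)=1$ pins the compression to $A$.

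Your route is shorter and reuses machinery already in the paper, and it buys a small bonus: it never invokes minimality of the unitary extensions, so it in fact proves the statement for \emph{any} unitary extensions. The paper's route is self-contained and, as a byproduct, establishes the stronger analytic fact that the net $\{B_\alpha\}$ actually WOT-converges (hence the resulting $B$ is canonical and independent of any choice of $\phi$).
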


\begin{proof}
Let $A\in \T(\Tilde{W},\Tilde{V})$. Then for $i=1,\dots,n$, the following chain of identities 
\begin{align*}
    W_i^* P_{\clk} AP_{\clh} V_i&=P_{\clk} \Tilde{W}_i^* P_{\clk} A \Tilde{V}_i|_{\clh}= P_{\clk} \Tilde{W}^*_i A \Tilde{V}_i|_{\clh}=P_{\clk} A|_{\clh}.
\end{align*}
shows that $\T(W, V)\supseteq P_{\clk} \T(\Tilde{W},\Tilde{V})|_{\clh}$.
For the other containment, let $ C\in \T(W, V)$. Then for $\alpha=(\alpha_1,\ldots,\alpha_n)\in \Z_+^n$,
 \[W^{* \alpha} C V^\alpha= C .\]
Now, we use a somewhat standard technique to `enlarge' $C$ and get hold of an operator $B\in\T(\Tilde{W},\Tilde{V}) $. For $\alpha=(\alpha_1,\ldots,\alpha_n)\in \Z_+^n$, we set 
\[B_\alpha: = \Tilde{W}^{* \alpha} P_{\clk} C P_{\clh} \Tilde{V}^\alpha, P_{1, \alpha}:=\Tilde{V}^{* \alpha}P_\clh \Tilde{V}^\alpha,\ \text{and  } P_{2 , \alpha}:= \Tilde{W}^{* \alpha} P_\clk \Tilde{W}^\alpha.\]
It is easy to perceive that $P_{1,\alpha}$ and $P_{2,\alpha}$ are projections onto $\Tilde{V}^{* \alpha} (\clh)$ and $\Tilde{W}^{* \alpha} (\clk)$, respectively and $\{P_{1,\alpha}\}_{\alpha\in\Z_+^n}$ and $\{P_{2,\alpha}\}_{\alpha\in\Z_+^n}$ are increasing net of projections. Moreover, $P_{1,\alpha}\uparrow I_{\tilde{\clh}}$ and $P_{2,\alpha} \uparrow I_{\tilde{\clk}}$ in SOT, which follows from the minimality of the extensions, that is, 
\[\tilde{\clh}=\bigvee_{\alpha \in \Z^n_+}\Tilde{V}^{* \alpha}(\clh)\ \text{and } \tilde{\clk}=\bigvee_{\alpha \in \Z^n_+} \Tilde{W}^{* \alpha}(\clk).\]
Also, it is easy to observe that the net$\{B_\alpha\}_{\alpha \in \Z_+^n}$ is uniformly bounded and it is bounded by $\|C\|$. Now, for $\alpha \geq \beta\geq 0$, observe that
\begin{align*}
    P_{2, \beta} B_\alpha P_{1,\beta}&=\Tilde{W}^{* \beta} P_{\clk} \Tilde{W}^\beta \Tilde{W}^{* \alpha} P_{\clk} C P_{\clh} \Tilde{V}^\alpha \Tilde{V}^{* \beta} P_\clh \Tilde{V}^\beta\\
    &=\Tilde{W}^{* \beta} P_\clk  \Tilde{W}^{* (\alpha-\beta)} P_\clk C P_\clh \Tilde{V}^{(\alpha-\beta)} P_\clh \Tilde{V}^\beta\\
    &=\Tilde{W}^{* \beta} P_\clk  W^{* (\alpha-\beta)}  C V^{(\alpha-\beta)} P_\clh \Tilde{V}^\beta\\
    &=\Tilde{W}^{* \beta} P_\clk C P_\clh \Tilde{V}^\beta\\
    &=B_\beta.
\end{align*}
Therefore, $P_{2, \beta} B_\alpha P_{1,\beta}$ is independent of $\alpha$ for $\alpha \geq \beta$. Consequently, for $x= P_{1,\beta}(x_1)\in P_{1, \beta} \tilde{\clh}$ and $y=P_{2,\beta}(y_1) \in P_{2, \beta} \tilde{\clk}$, 
\begin{align*}
    \lim_{\alpha \to \infty} \langle B_\alpha x, y \rangle &= \lim_{\alpha \to \infty} \langle P_{1,\beta} B_\alpha P_{2 ,\beta} x_1, y_1 \rangle= \langle B_\beta x_1, y_1\rangle.
\end{align*}
Since the net $\{B_\beta \}_{\beta \in \Z^n_+}$ is uniformly bounded, then it follows that $\lim_{\alpha \to \infty} \langle B_{\alpha}x, y \rangle$ exists for all $x\in \bigvee_{ \beta \in \Z^n_+} P_{1, \beta} \tilde{\clh}=\tilde{\clh}$ and $y\in \bigvee_{\beta \in \Z^n_+}P_{2, \beta} \tilde{\clk}=\tilde{\clk}$. In other words, $B_{\alpha}$ converges in the weak operator topology to $B\in \clb(\tilde{\clh}, \tilde{\clk})$, say. Now, we do a routine computation to show that 
$P_\clk B|_{\clh}=C$ and for $i=1,\ldots,n$, $\Tilde{W}^*_i B \Tilde{V}_i=B$.
To this end, for $x\in \clh$ and $y \in \clk$,
\begin{align*}
    \langle P_\clk B|_{\clh} x,y \rangle&= \lim_{\alpha \to \infty} \langle P_\clk \Tilde{W}^{* \alpha} P_\clk C P_\clh \Tilde{V}^\alpha x, y \rangle=\lim_{\alpha \to \infty} \langle  W^{* \alpha} C  V^\alpha x, y \rangle=\langle C x, y\rangle.
\end{align*}
For $\alpha=(\alpha_1,\ldots,\alpha_n)\in \Z_+^n$ and $e_i=(0,\ldots,0,\underbrace{1}_{i\text{th place}},0,\ldots,0) \in \Z_+^n$, note that  
\[\Tilde{W}^*_i B_\alpha \Tilde{V}_i=B_{\alpha +e_i}\quad (i=1,\ldots,n),\]
and therefore, for each $i=1,\ldots,n$, $\Tilde{W}^*_i B \Tilde{V}_i=B$. This completes the proof of  $\T(W, V)= P_\clk \T(\Tilde{W},\Tilde{V})|_{\clh}$. Moreover, since $B$ is the weak operator limit of $\{B_{\alpha}\}_{\alpha\in \Z_+^n}$ and the net is uniformly bounded by $\|C\|$, it follows that $\|B\|=\|C\|$. This proves the moreover part of the theorem. 
\end{proof}

We conclude the article with a necessary and sufficient condition for $\T(U, V)$ to be non-trivial, where $U$ and $V$ are $n$-tuples of commuting unitaries.


\begin{Corollary}
Let $U=(U_1,\ldots,U_n)\in \clb(\clh)^n$ and $V=(V_1,\ldots,V_n)\in \clb(\clk)^n$ be two $n$-tuples of commuting unitaries. Then $\T(U,V)\neq \{0\}$ if and only if there exist two joint reducing subspaces $\clm$ and $\cln$ of $U$ and $V$, respectively, such that 
$$(U_1 |_{\clm}, \ldots, U_n|_{\clm}) \cong (V_1 |_{\cln}, \ldots, V_n|_{\cln}).$$
\end{Corollary}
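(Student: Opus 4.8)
The plan is to reduce the defining identity $U_i^* X V_i = X$ to an intertwining relation and then exploit the polar decomposition of a nonzero solution. Since each $U_i$ is unitary, the equation defining $\T(U,V)$ is equivalent to $U_i X = X V_i$ for all $i$; that is, $\T(U,V)$ is precisely the space of operators $X \in \clb(\clk,\clh)$ intertwining the tuple $V$ with the tuple $U$. Throughout I read the asserted equivalence as concerning \emph{nonzero} reducing subspaces, since the pair $\clm=\cln=\{0\}$ trivially satisfies the unitary-equivalence condition but corresponds only to $X=0$.

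For the ``if'' direction, suppose $\clm\subseteq\clh$ and $\cln\subseteq\clk$ are nonzero joint reducing subspaces of $U$ and $V$, respectively, and $w\colon\cln\to\clm$ is a unitary with $w\,(V_i|_{\cln})=(U_i|_{\clm})\,w$ for all $i$. I would set $X=\iota_{\clm}\,w\,P_{\cln}$, where $P_{\cln}$ is the orthogonal projection of $\clk$ onto $\cln$ and $\iota_{\clm}\colon\clm\hookrightarrow\clh$ is the inclusion. Because $\cln$ reduces $V_i$ we have $P_{\cln}V_i=V_iP_{\cln}$, and because $\clm$ reduces $U_i$ we have $U_i\iota_{\clm}=\iota_{\clm}(U_i|_{\clm})$; feeding the intertwining relation for $w$ through these identities yields $U_iX=XV_i$, so $X\in\T(U,V)$, and $X\ne0$ since $\clm,\cln\ne\{0\}$.

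For the ``only if'' direction, take a nonzero $X\in\T(U,V)$, so $U_iX=XV_i$ and hence also $U_i^*X=XV_i^*$. The key preliminary computations are that $X^*X$ commutes with every $V_i$ and $XX^*$ commutes with every $U_i$: indeed $V_i^*(X^*X)V_i=(XV_i)^*(XV_i)=(U_iX)^*(U_iX)=X^*X$, and the companion identity $U_i(XX^*)U_i^*=XX^*$ follows symmetrically. Writing the polar decomposition $X=W|X|$ with $|X|=(X^*X)^{1/2}$, I would put $\clm=\overline{\text{Ran}}\,X$ and $\cln=\overline{\text{Ran}}\,X^*=(\ker X)^{\perp}$. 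Using $U_iX=XV_i$ together with $U_i^*X=XV_i^*$ one sees $U_i\clm\subseteq\clm$ and $U_i^*\clm\subseteq\clm$, so $\clm$ reduces $U$; taking adjoints of the two relations gives that $\cln$ reduces $V$. Finally, since $X^*X$ commutes with each $V_i$, continuous functional calculus shows $|X|$ commutes with each $V_i$, whence $U_iW|X|=U_iX=XV_i=W|X|V_i=WV_i|X|$; as $\text{Ran}\,|X|$ is dense in $\cln$ and both sides vanish on $\cln^{\perp}$ (which also reduces $V_i$), this yields $U_iW=WV_i$ on all of $\clk$. Restricting the unitary $w:=W|_{\cln}\colon\cln\to\clm$ then gives $(U_i|_{\clm})w=w(V_i|_{\cln})$, the desired unitary equivalence of the restricted tuples.

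The bulk of the argument is routine operator-theoretic bookkeeping; the one step requiring genuine care is the passage through the polar decomposition. Specifically, I expect the main obstacle to be verifying cleanly that $|X|$ commutes with each $V_i$ (via functional calculus applied to $X^*X$) and that the partial isometry $W$ intertwines $V_i$ with $U_i$ not merely on the dense range of $|X|$ but on all of $\clk$ — handling the behavior on $\cln^{\perp}=\ker X$ correctly, for which I rely on $\cln^{\perp}$ being reducing for $V$. Once that is in place, $w=W|_{\cln}$ is automatically a unitary from $\cln$ onto $\clm$ implementing the equivalence, and both subspaces are nonzero because $X\ne0$.
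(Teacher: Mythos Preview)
Your proposal is correct and follows essentially the same approach as the paper. The paper's proof sets $\clm=(\ker B)^\perp$ and $\cln=\overline{\text{Ran}}\,B$ for a nonzero intertwiner $B$ and then invokes \cite[Lemma~4.1]{DG} for the unitary equivalence, whereas you have written out that lemma's polar-decomposition argument in full; your ``if'' direction is what the paper dismisses as ``easy.''
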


\begin{proof}

Let us assume that $\T(V,U)\neq \{0\}$. Then there is a non zero operator $B$ such that \[B U_i =V_i B \quad (i=1,\ldots,n).\]
So, if we consider $\clm=(\text{Ker}\,B)^\perp$ and $\cln=\overline{\text{Ran}}\,B$, then by ~\cite[Lemma 4.1]{DG}, we have 
$$(U_1 |_{\clm}, \ldots, U_n|_{\clm}) \cong (V_1 |_{\cln}, \ldots, V_n|_{\cln}).$$
The other implication is easy. This completes the proof.
\end{proof}

\vspace{0.1in} \noindent\textbf{Conflict of interest:}
The author states that there is no conflict of interest. No data sets were generated or analyzed during the current study.

\vspace{0.1in} \noindent\textbf{Acknowledgement:}
The author would like to thank his supervisor, Prof. Bata Krishna Das, for introducing him to the problem. The author is also grateful to him for reading the article with his prodigious patience.

\bibliographystyle{amsplain}


\end{document}